\newcommand{\R}{\mathbb{R}}
\newcommand{\N}{\mathbb{N}}
\newcommand{\Z}{\mathbb{Z}}
\newcommand{\lr}[1]{\langle #1 \rangle}
\newcommand{\eps}{\varepsilon}
\newcommand{\Lc}{\mathcal{L}}
\newcommand{\lp}{\mathcal{U}}
\newcommand{\uf}{\mathfrak{u}}
\newcommand{\vf}{\mathfrak{v}}
\newcommand{\wf}{\mathfrak{w}}
\newcommand{\J}{\mathfrak{J}}
\newcommand{\K}{\mathfrak{K}}
\newcommand{\dx}{\partial_x}
\newcommand{\dy}{\partial_y}
\newcommand{\dt}{\partial_t}
\DeclareMathOperator*{\supp}{supp}
\renewcommand{\H}{\mathcal{H}}
\newcommand{\wt}[1]{\widetilde{#1}}
\newcommand{\wh}[1]{\widehat{#1}}
\newtheorem{thm}{Theorem}[section]
\newtheorem{prop}[thm]{Proposition}
\newtheorem{lem}[thm]{Lemma}
\theoremstyle{remark}
\newtheorem{rmk}[thm]{Remark}
\numberwithin{equation}{section}
\title[WP for KdV-type eqs]{Well-posedness for KdV-type equations \\ with quadratic nonlinearity}
\author[H. Hirayama]{Hiroyuki Hirayama}
\address[H. Hirayama]{Organization for Promotion of Tenure Track, University of Miyazaki, 1-1, Gakuenkibanadai-nishi, Miyazaki, 889-2192 Japan}
\email{h.hirayama@cc.miyazaki-u.ac.jp}
\author[S. Kinoshita]{Shinya Kinoshita}
\address[S. Kinoshita]{Universit\"at Bielefeld, 
Fakult\"at f\"ur Mathematik, Postfach 10 01 31 33501, Bielefeld, Germany}
\email{kinoshita@math.uni-bielefeld.de}
\author[M. Okamoto]{Mamoru Okamoto}
\address[M. Okamoto]{Division of Mathematics and Physics, Faculty of Engineering, Shinshu University, 4-17-1 Wakasato, Nagano City 380-8553, Japan}
\email{m\_okamoto@shinshu-u.ac.jp}
\subjclass[2010]{35Q53; 35A01}
\keywords{KdV-type equation; well-posedness; gauge transformation}
\date{\today}
\begin{document}

\begin{abstract}
We consider the Cauchy problem of the KdV-type equation
\[
\dt u + \frac{1}{3} \dx^3 u = c_1 u \dx^2u + c_2 (\dx u)^2, \quad u(0)=u_0.
\]
Pilod (2008) showed that the flow map of this Cauchy problem fails to be twice differentiable in the Sobolev space $H^s(\R)$ for any $s \in \R$ if $c_1 \neq 0$.
By using a gauge transformation, we point out that the contraction mapping theorem is applicable to the Cauchy problem if the initial data are in $H^2(\R)$ with bounded primitives.
Moreover, we prove that the Cauchy problem is locally well-posed in $H^1(\R)$ with bounded primitives.
\end{abstract}

\maketitle

\section{Introduction}

We consider the Cauchy problem for the Korteweg-de Vries (KdV) type equation
\begin{equation} \label{KdV}
\dt u + \frac{1}{3} \dx^3 u = c_1 u \dx^2u + c_2 (\dx u)^2,
\end{equation}
where $u$ is a real valued function and $c_1$ and $c_2$ are real constants.

If $c_1=0$, because $\dx u$ satisfies the KdV equation,
the results by Kenig et al. \cite{KPV96} and Kishimoto \cite{Kis09} imply that \eqref{KdV} is well-posed in the Sobolev space $H^s (\R)$ for $s \ge \frac{1}{4}$.
On the other hand, Tarama \cite{Tar95} proved that even a linear equation requires a Mizohata-type condition for the well-posedness in $L^2(\R)$ (see also \cite{Miz85}).
Indeed, the linear equation
\[
(\dx + \dx^3 + a(x) \dx^2) u =0
\]
where $a$ is smooth with bounded derivatives is well-posed in $L^2(\R)$ if and only if
\[
\sup_{x_1 \le x_2} \int_{x_1}^{x_2} a(x) dx <\infty
\]
holds.
Hence, at least, well-posedness in $H^s(\R)$ for \eqref{KdV} requires some additional conditions.
In fact, Pilod \cite{Pil08} showed that the flow map of this Cauchy problem fails to be twice differentiable in $H^s(\R)$ for any $s \in \R$ if $c_1 \neq 0$.

Local well-posedness was established using the weighted Sobolev spaces $H^s(\R) \cap L^2(x^{2k}dx)$ for sufficiently large $s$ and $k$ by Kenig et al. \cite{KPV94} and Kenig and Staffilani \cite{KenSta97}.
For the proof, they used a change of dependent variables as in \cite{HayOza92, HayOza94}.
In these works, the change of dependent variable was called a gauge transformation.
By replacing weighted spaces with a spatial summability condition, Harrop-Griffiths \cite{Har-G15I} proved local well-posedness for \eqref{KdV} in a translation invariant space $l^1 H^s (\R)$ for $s>\frac{5}{2}$.
We note that he also treated more general semi-linear nonlinearity (see also \cite{Har-G15II}).

We mention the well-posedness results for the third-order Benjamin-Ono equation
\begin{equation} \label{3BO}
\dt u - b \H \dx^2 u + a \dx^3u = c u^2 \dx u - \dx ( u \H \dx u + \H ( u \dx u)),
\end{equation}
where $\H$ is the Hilbert transform, $a,b,c$ are constants with $a \neq 0$ and $b,c \ge 0$.
As in \eqref{KdV}, local well-posedness in $H^s(\R)$ for \eqref{3BO} cannot be established by an iteration argument.
When $b=0$, Feng and Han \cite{FenHan96} performed the energy estimate and proved the existence of a unique global solution in $H^s(\R)$ for $s \ge 4$ (see also \cite{Fen97}).
By using a gauge transformation as in \cite{HayOza92, HayOza94}, Linares et al. \cite{LPP11} proved that the Cauchy problem for \eqref{3BO} with $c=0$ is locally well-posed in $H^s(\R)$ with $s \ge 2$ or $H^k (\R) \cap L^2(x^2 dx)$ with $k \in \Z_{\ge 2}$.
Molinet and Pilod \cite{MolPil12} showed that the global-in-time well-posedness in $H^1(\R)$.
In addition to the gauge transformation, they used the Fourier restriction norm to show an a priori estimate in $H^1(\R)$.

In this paper, by using a gauge transformation as in \cite{Oza98}, we show the well-posedness for \eqref{KdV} in the Sobolev spaces with bounded primitives.
We define the function space
\[
\mathcal{X}^s := \left\{ f \in H^s (\R) \colon \sup_{x \in \R} \left| \int_{-\infty}^x f(y) dy \right| <\infty \right\}
\]
for $s \in \R$.
This space is a Banach space equipped with the norm
\[
\| f \|_{\mathcal{X}^s} := \| f \|_{H^s} + \left\| \int_{-\infty}^x f(y) dy \right\|_{L_x^{\infty}}
\]
for $s > \frac{1}{2}$ (see Proposition 1 in \cite{Oza98}).
The following is our main result.

\begin{thm} \label{WP}
The Cauchy problem for \eqref{KdV} with $u(0) =u_0$ is local-in-time well-posed in $\mathcal{X}^s$ for $s \ge 1$.
Moreover, the flow map is (locally) Lipschitz continuous.
In addition, the existence time depends only on $\| u_0 \|_{\mathcal{X}^1}$.
\end{thm}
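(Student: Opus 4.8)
The plan is to remove the derivative loss by a gauge transformation and thereby reduce \eqref{KdV} to a KdV-type equation whose quadratic nonlinearity carries at most one derivative. First I would differentiate \eqref{KdV} in $x$ and set $w := \dx u$, which satisfies
\[
\dt w + \frac{1}{3}\dx^3 w - c_1 u\,\dx^2 w = (c_1+2c_2)\, w\,\dx w .
\]
The obstruction identified by Pilod is exactly the variable-coefficient second-order term $c_1 u\,\dx^2 w$; recall that the linear model $(\dx+\dx^3+a\,\dx^2)$ from the introduction is well-posed precisely under the Mizohata--Tarama condition $\sup_{x_1\le x_2}\int_{x_1}^{x_2} a\,dx<\infty$, which for $a=c_1 u$ is nothing but the boundedness of the primitive $U(t,x):=\int_{-\infty}^x u(t,y)\,dy$, i.e. the defining condition of $\mathcal{X}^s$. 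I would therefore conjugate by this primitive: setting $\theta:=c_1 U$ and $v := e^{-\theta} w$, a direct computation shows that the coefficient of $\dx^2 v$ in the resulting equation equals $\theta_x - c_1 u$ and hence vanishes, so $v$ solves an Airy equation
\[
\dt v + \frac{1}{3}\dx^3 v = a_1\,\dx v + a_0\, v + \mathcal{N}(v)
\]
with no second-order term, where $a_1,a_0$ are coefficients built from $u,\dx u,U$ and the quadratic part $\mathcal{N}$ involves at most one derivative of $v$ (Burgers-type terms $\sim e^{\theta} v\,\dx v$). The time derivative of the gauge contributes $\theta_t v$; using \eqref{KdV} one finds $\theta_t=c_1\bigl(-\tfrac13 \dx^2u+c_1 u\,\dx u+(c_2-c_1)\int_{-\infty}^x (\dx u)^2\bigr)$, and the only genuinely nonlocal term $\int_{-\infty}^x (\dx u)^2$ is bounded by $\|u\|_{H^1}^2$, so all coefficients stay controlled in $\mathcal{X}^s$.

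For frozen bounded $U$, multiplication by $e^{\pm\theta}$ is an isomorphism of $H^s$ for $s>\tfrac12$, bounded above and below since $U\in L^\infty$, so it suffices to solve the $v$-equation and recover $u$ from the coupled relations $\dx U=u$ and $\dx u=e^{\theta} v$. For $s\ge 2$ I would run a contraction on the system for $(v,u,U)$ in a space combining the $\mathcal{X}^s$-norm with the Airy smoothing norms of Kenig--Ponce--Vega (the local smoothing bound $\|\dx(\cdot)\|_{L^\infty_xL^2_T}$, its dual inhomogeneous version, a maximal-function estimate, and an $L^4$ Strichartz bound). The crucial point is that the worst nonlinearity is now of the form $v\,\dx v$: the single derivative it carries is absorbed by the gain of one derivative in the local smoothing estimate, exactly as in the KdV theory, so no derivative is lost and the Duhamel map contracts on a ball for short time, with existence time controlled by the norm. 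Uniqueness and the local Lipschitz continuity of the flow follow from the same estimates applied to differences, once one controls $e^{-\theta_1}-e^{-\theta_2}$ through $U_1-U_2$ in $L^\infty$.

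For the endpoint $s=1$ a naive energy estimate still formally loses a derivative in a cubic remainder arising from $\dx a_1$, so I would instead establish a priori estimates that are uniform in higher regularity — either by the energy method supplemented with the local smoothing gain, or by a Fourier restriction norm as in Molinet--Pilod \cite{MolPil12} — yielding an existence time $T=T(\|u_0\|_{\mathcal{X}^1})$ depending only on the $\mathcal{X}^1$-norm. Solutions in $\mathcal{X}^1$ are then constructed by mollifying the data, solving in $\mathcal{X}^2$ by the previous step on the common interval $[0,T]$, and passing to the limit with the uniform bounds and a compactness argument; the same bounds show that higher $\mathcal{X}^s$ regularity persists on $[0,T]$, giving the regularity-independent existence time. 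Continuous dependence at $s=1$ I would obtain by a Bona--Smith-type argument comparing the solution with those emanating from mollified data.

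The hard part will be closing the estimates for the fully nonlinear, nonlocal gauge: the coefficients $a_0,a_1$ and $\mathcal{N}$ depend on $u$ and on the primitive $U$ nonlinearly and nonlocally, so every estimate must simultaneously control $v$, $u$, and $\sup_x|U|$ in a self-consistent way, and the difference estimates needed for uniqueness and Lipschitz dependence must track the primitive of the difference in $L^\infty$. The second delicate point is the $s=1$ endpoint, where recovering the derivative lost in the cubic remainder and upgrading existence and uniqueness to genuine continuous dependence is where the argument is tightest.
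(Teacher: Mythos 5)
Your reduction for $s\ge 2$ is essentially the paper's: differentiate, gauge by $e^{-c_1\int_{-\infty}^x u}$, and run a contraction for the coupled system $(u,\vf)$ with $\vf=e^{-\Lambda}\dx u$ in the Kenig--Ponce--Vega smoothing/maximal-function spaces. That part of the proposal is sound and matches \S\ref{S:WPX0}.

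The gap is at the endpoint $s=1$, which is the actual content of the theorem. First, you misidentify the obstruction: it is not a cubic remainder coming from $\dx a_1$ but the quadratic Burgers-type term $2c_2e^{\Lambda}\vf\,\dx\vf$ in \eqref{eq:vf}, present exactly when $c_2\neq 0$. For $u_0\in\mathcal{X}^1$ one only has $\vf\in L^2$, and the natural pairing $\|\vf\|_{L_x^2L_T^{\infty}}\|\dx\vf\|_{L_x^{\infty}L_T^2}$ does not close because the maximal-function estimate (Lemma \ref{lem:KL}) needs $H^{3/4+}$ data; for $u$ itself this loss is recovered through the relation $\dx u=e^{\Lambda}\vf$ (Lemma \ref{lem:unbound}), but no analogous gain exists for $\vf$. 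Your proposed remedies --- an energy method supplemented by smoothing, or Fourier restriction norms as in Molinet--Pilod --- are left entirely unexecuted, and the paper explicitly avoids both: its actual mechanism is a \emph{second} gauge transformation (first by $e^{-c_2\int_{-\infty}^x u}$, then by $e^{-(c_1-c_2)\int_{-\infty}^x u}$ in \S\ref{S:WPX13}), which eliminates every quadratic term carrying a derivative, after which the $\mathcal{X}^1$ a priori bound \eqref{apriori000} closes with the same product estimates used at $s=2$. Without this step, or a genuinely worked-out substitute, your a priori estimate at $s=1$ does not close. Second, the Bona--Smith argument you invoke would at best yield continuity of the flow map, not the local Lipschitz continuity asserted in the theorem; the paper instead proves a difference estimate directly in the low-regularity norms, namely $\|\Lambda_n-\Lambda_m\|_{L_{T,x}^{\infty}}+\|u_n-u_m\|_{X_T}+\|\vf_n-\vf_m\|_{X_T}\lesssim\|u_{0,n}-u_{0,m}\|_{\mathcal{X}^1}$, which is what delivers both the convergence of the approximating solutions and the Lipschitz claim.
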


\begin{rmk}
We note that $l^1 H^s(\R)$ is embedded in $H^s(\R)$ and that $s>1$ yields $l^1 H^s(\R) \hookrightarrow L^1(\R)$.
Hence, our functions space $\mathcal{X}^s$ is bigger than $l^1 H^s(\R)$, indeed
\[
\sup_{x \in \R} \left| \int_{-\infty}^x f(y) dy \right|
\le \| f \|_{L^1}
\lesssim \| f \|_{l^1 H^s}
\]
holds provided that $s>1$.
Moreover, the function $f(x) = \frac{\sin x}{x}$ is an example that $f \in \mathcal{X}^s$ for any $s \in \R$, but $f \notin l^1H^s(\R)$.
In the quadratic setting, our result is an improvement of that in \cite{Har-G15I} from the view point both of the integrability and the regularity.
\end{rmk}

For the proof, we use a gauge transformation as in \cite{Oza98}, which makes \eqref{KdV} a coupled system of KdV-type equations (see \eqref{eq:u} and \eqref{eq:vf} below).
Roughly speaking, the gauge transformation for \eqref{KdV} and \eqref{3BO} is defined as
\[
u \mapsto e^{\int_{-\infty}^x u(t,y)dy} u, \quad
u \mapsto e^{i \int_{-\infty}^x u(t,y)dy} u,
\]
respectively.
Thanks to the presence of $i$, the $L^2$-norm is invariant under the gauge transformation for \eqref{3BO}.
On the other hand, the $L^2$-boundedness of the gauge transformation for \eqref{KdV} requires that the primitives of $u$ are bounded.

Here, we give an outline of the proof of Theorem \ref{WP}.
Our proof depends on the gauge transformation but not on the energy estimate and the Fourier restriction norm.
To calculate the nonlinear terms, we use the Strichartz estimate, the local smoothing estimate, and the maximal function estimate.

We apply the gauge transformation to rewrite \eqref{KdV} to a coupled system of KdV-type equations as mentioned above.
First, by using the contraction mapping theorem, we show that the system is well-posed in $\mathcal{X}^1 \times H^1$ in \S \ref{S:WPX0}, which yields that \eqref{KdV} is well-posed in $\mathcal{X}^2$.
Second, we prove the a priori estimate \eqref{apriori000} in \S \ref{S:WPX1}, which says that the existence time depends only on $\| u_0 \|_{\mathcal{X}^1}$ as long as $u$ is a solution to \eqref{KdV}.
Therefore, Theorem \ref{WP} with $s=1$ follows from an approximation argument and the fact that the solution to \eqref{KdV} exists at least in $\mathcal{X}^2$.
Because the well-posedness in $\mathcal{X}^2$ is required only in this approximation argument, we may use the result in \cite{Har-G15I} instead of the well-posedness in $\mathcal{X}^2$.
However, for a self-contained proof of Theorem \ref{WP}, we employ the well-posedness in $\mathcal{X}^2$.
Third, by applying the fractional Leibniz rule as in \cite{KPV93}, we show the well-posedness in $\mathcal{X}^s$ for $s \ge 1$ and the persistence property in \S \ref{S:WPX12}.

We observe that $\| u \|_{L_x^2 L_T^{\infty}}$ is bounded by the norms of $u$ and the gauge transformed $u$ (Lemma \ref{lem:unbound}).
Because the quadratic term with derivative in \eqref{eq:vf} vanishes when $c_2=0$, the a priori bound \eqref{apriori000} (see also \eqref{apriori00}) follows from these facts and a similar argument as in \S \ref{S:WPX0}.
For $c_2 \neq 0$, by using a gauge transformation, we rewrite \eqref{KdV} to an equation which contains no terms of the form $(\dx u)^2$.
Namely, we apply the gauge transformation twice to obtain Theorem \ref{WP} in general.
This is the reason why we can avoid using the Fourier restriction norm.

Our argument can estimate the difference of two solutions to \eqref{KdV}, and hence the flow map is (locally) Lipschitz continuous.
On the other hand, the flow map is not smooth for low-regularity data even with bounded primitives.

\begin{prop}
If $s<1$, then the flow map of \eqref{KdV} fails to be twice differentiable in $\mathcal{X}^s$.
\end{prop}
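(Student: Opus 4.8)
The plan is to argue by contradiction, following the Bourgain--Molinet--Saut--Tzvetkov scheme for disproving smoothness of a flow map, but to read the obstruction off the \emph{primitive}, which is the extra ingredient in the $\mathcal{X}^s$-norm; this is what will push the threshold from Pilod's $H^s$-statement up to $s=1$. Fix $t>0$ and suppose the data-to-solution map $\Phi_t\colon u_0\mapsto u(t)$ is twice differentiable at $u_0=0$. Writing $\mathcal{N}(u):=c_1u\dx^2u+c_2(\dx u)^2$ and denoting by $S(t)$ the Airy propagator associated with $\dt+\tfrac13\dx^3$, one has $\Phi_t(0)=0$ and, since $\mathcal{N}$ is quadratic, $D\Phi_t(0)\phi=S(t)\phi$. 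Differentiating the Duhamel formula twice in $\delta$ at $\delta=0$ along a direction $\phi$ then forces
\[
D^2\Phi_t(0)[\phi,\phi]=2u_2(t),\qquad u_2(t)=\int_0^t S(t-t')\,\mathcal{N}\big(S(t')\phi\big)\,dt' .
\]
Twice differentiability means $D^2\Phi_t(0)$ is a bounded bilinear map on $\mathcal{X}^s$, so it suffices to exhibit a family $\phi=\phi_N$ for which the bilinear estimate $\|u_2(t)\|_{\mathcal{X}^s}\lesssim\|\phi_N\|_{\mathcal{X}^s}^2$ fails.

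The key point is that the obstruction sits at zero output frequency, exactly where the primitive acts. Integrating in $x$, using $\int_{\R}u\dx^2u\,dx=-\int_{\R}(\dx u)^2\,dx$ and the $L^2$-isometry of $S(t')$,
\[
\int_{\R}\mathcal{N}\big(S(t')\phi\big)\,dx=(c_2-c_1)\,\|\dx S(t')\phi\|_{L^2}^2=(c_2-c_1)\,\|\dx\phi\|_{L^2}^2,
\]
which is constant in $t'$. Since $S$ fixes the zero mode, $\widehat{u_2(t)}(0)=(c_2-c_1)\,t\,\|\dx\phi\|_{L^2}^2$. For Schwartz $\phi$ the iterate $u_2(t)$ is Schwartz, so its primitive $P(x)=\int_{-\infty}^xu_2(t,y)\,dy$ obeys $P(-\infty)=0$ and $P(+\infty)=\widehat{u_2(t)}(0)$, whence
\[
\|u_2(t)\|_{\mathcal{X}^s}\ge\Big\|\int_{-\infty}^xu_2(t,y)\,dy\Big\|_{L_x^\infty}\ge|c_2-c_1|\,t\,\|\dx\phi\|_{L^2}^2 .
\]

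It remains to make $\|\dx\phi\|_{L^2}^2$ large relative to $\|\phi\|_{\mathcal{X}^s}^2$, which is achieved by concentrating at high frequency. Taking $\widehat{\phi_N}$ a smooth bump of height one supported on $[N,N+1]\cup[-N-1,-N]$ (so $\phi_N$ is real-valued), one has $\|\dx\phi_N\|_{L^2}^2\sim N^2$ and $\|\phi_N\|_{H^s}\sim N^s$, while the primitive of the data is negligible, $\|\int_{-\infty}^x\phi_N\|_{L^\infty}\lesssim\|\widehat{\phi_N}/\xi\|_{L^1}\sim N^{-1}$, because $\widehat{\phi_N}$ is supported away from the origin. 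Hence $\|\phi_N\|_{\mathcal{X}^s}\sim N^s$ and
\[
\frac{\|u_2(t)\|_{\mathcal{X}^s}}{\|\phi_N\|_{\mathcal{X}^s}^2}\gtrsim\frac{t\,N^2}{N^{2s}}=t\,N^{2-2s},
\]
which tends to $\infty$ as $N\to\infty$ when $s<1$, contradicting the bilinear bound. The threshold $s=1$ thus arises transparently: the primitive detects the non-vanishing quadratic mass $(c_2-c_1)\|\dx\phi\|_{L^2}^2\sim N^2$, whereas $\|\phi_N\|_{\mathcal{X}^s}^2\sim N^{2s}$.

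Two steps need care. The first is the reduction itself: to justify $D^2\Phi_t(0)[\phi,\phi]=2u_2(t)$ and the control of the $o(\delta^2)$ remainder, the cleanest route is to define $u_2$ directly as the second $\delta$-derivative and to obtain its Duhamel form from the equation, invoking the local theory of Theorem~\ref{WP} (valid for the smooth data used above). The main obstacle, however, is the degenerate case $c_1=c_2$: there the nonlinearity is the exact derivative $\tfrac{c_1}{2}\dx^2(u^2)$, the zero-frequency mass vanishes, and the primitive mechanism is silent. One must then search for a different resonant frequency interaction, and here the bounded-primitive constraint of $\mathcal{X}^s$ strongly penalizes any low-frequency factor—precisely the feature that neutralizes Pilod's $H^s$ counterexample—so even isolating a genuine loss in this case is delicate; I would therefore treat $c_1\neq c_2$ by the clean mass argument above and analyze $c_1=c_2$ by a separate, more careful frequency analysis.
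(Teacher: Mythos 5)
Your proposal is correct and is essentially the paper's own argument: both detect the obstruction in the zero Fourier mode of the second Picard iterate --- equivalently the value of its primitive at $x=+\infty$, which equals $(c_2-c_1)\,t\,\|\partial_x\phi\|_{L^2}^2\sim t\,N^{2-2s}$ for real data concentrated at frequency $\pm N$ and normalized in $\mathcal{X}^s$ (the paper uses $\mathcal{F}^{-1}$ of indicator functions of width $N^{-a}$ rather than smooth bumps, an immaterial difference). Your caveat about the degenerate case $c_1=c_2$, where the nonlinearity is the total derivative $\tfrac{c_1}{2}\partial_x^2(u^2)$ and the zero-mode mechanism is silent, is consistent with the paper, which likewise proves the statement only for the representative choice $c_1=1$, $c_2=0$.
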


We also consider a semi-linear KdV-type equation with quadratic nonlinearity
\begin{equation} \label{KdV2}
\dt u + \frac{1}{3} \dx^3 u = c_1 u \dx^2u + c_2 (\dx u)^2 + c_3 \dx u \dx^2 u + c_4 (\dx^2 u)^2.
\end{equation}
Because $\dx^2u$ ($\dx u$ if $c_4=0$) satisfies an equation like as \eqref{KdV}, the same argument as in the proof of Theorem \ref{WP} yields the following:

\begin{thm} \label{WP2}
The Cauchy problem for \eqref{KdV2} with $u(0) =u_0$ is local-in-time well-posed in $\mathcal{X}^3$.
Moreover, we can replace $\mathcal{X}^3$ by $\mathcal{X}^2$ if $c_4=0$.
In addition, the persistence of regularity holds.
\end{thm}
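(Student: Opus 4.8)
The plan is to reduce \eqref{KdV2} to the setting of Theorem~\ref{WP} by differentiating, choosing the number of derivatives so that the top-order nonlinearity takes exactly the form appearing in \eqref{KdV}. For the general equation I set $w := \dx^2 u$; for $c_4 = 0$ I set $v := \dx u$. Applying $\dx^2$ to \eqref{KdV2} and collecting terms, a direct computation gives
\begin{equation*}
\dt w + \frac{1}{3}\dx^3 w = (c_1 u + c_3 \dx u + 2 c_4 w)\,\dx^2 w + 2 c_4 (\dx w)^2 + R,
\end{equation*}
where $R$ gathers the remaining terms, in each of which at most one derivative falls on any single factor. When $c_4 = 0$, a single differentiation already suffices and yields
\begin{equation*}
\dt v + \frac{1}{3}\dx^3 v = (c_1 u + c_3 v)\,\dx^2 v + c_3 (\dx v)^2 + (c_1 + 2 c_2) v\, \dx v.
\end{equation*}
In both cases the equation has precisely the shape of \eqref{KdV}: a coefficient multiplying $\dx^2 w$ (resp.\ $\dx^2 v$), a pure square $(\dx w)^2$ (resp.\ $(\dx v)^2$), and strictly lower-order remainders. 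The reason two derivatives are needed when $c_4 \neq 0$ is that the term $(\dx^2 u)^2$ only becomes of the form $w\,\dx^2 w$ — with the coefficient at the same regularity as the unknown — after applying $\dx^2$; one derivative would leave it as $\dx v\,\dx^2 v$, whose coefficient $\dx v$ is one derivative too rough.

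The crucial structural point is that the coefficient of $\dx^2 w$, namely $c_1 u + c_3 \dx u + 2 c_4 w$, has a bounded primitive and lies in $H^1(\R)$. Indeed, for $u \in \mathcal{X}^3$ one has $u \in H^3$, $\dx u \in H^2$, and $w = \dx^2 u \in H^1$, while
\begin{equation*}
\int_{-\infty}^x (c_1 u + c_3 \dx u + 2 c_4 w)\, dy = c_1 \int_{-\infty}^x u\, dy + c_3 u + 2 c_4 \dx u
\end{equation*}
is bounded, the first summand by the definition of $\mathcal{X}^3$ and the other two by the embedding $H^3(\R) \hookrightarrow L^\infty(\R)$ applied to $u$ and $\dx u$. (For $c_4 = 0$ the coefficient $c_1 u + c_3 v$ has primitive $c_1 \int_{-\infty}^x u\, dy + c_3 u$, bounded already for $u \in \mathcal{X}^2$.) This is exactly the condition under which the gauge transformation of \cite{Oza98} makes \eqref{KdV} accessible to the contraction mapping theorem. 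Moreover, differentiation matches the function spaces: $u \in \mathcal{X}^3$ forces $w = \dx^2 u \in \mathcal{X}^1$, since $\dx^2 u \in H^1$ and $\int_{-\infty}^x \dx^2 u\, dy = \dx u$ is bounded; likewise $u \in \mathcal{X}^2$ gives $v = \dx u \in \mathcal{X}^1$. The lower-order quantities $u$ and $\dx u$ (resp.\ $u$) that appear as coefficients are the iterated primitives of $w$ (resp.\ $v$), and the $\mathcal{X}$-conditions are exactly what keep them bounded.

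With these observations, the three steps of the proof of Theorem~\ref{WP} transfer with only cosmetic changes: the gauge transformation turns the equation above into a coupled system for $w$ (resp.\ $v$) and its gauged counterpart, solved by the contraction mapping theorem in the product space used there (with the unknown in $\mathcal{X}^1$ and its gauged version in $H^1$); the a priori estimate fixes the existence time; and the fractional Leibniz rule yields persistence of regularity, so that the constructed $w$ (resp.\ $v$) integrates to a solution $u \in \mathcal{X}^3$ (resp.\ $\mathcal{X}^2$) of \eqref{KdV2}. Every term in the remainder $R$ is of lower order and is controlled by the same Strichartz, local smoothing, and maximal function estimates.

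I expect the main obstacle to be the nonlocal, coupled nature of the reduced problem rather than any genuinely new estimate. Unlike \eqref{KdV}, the coefficient of $\dx^2 w$ depends not only on the unknown $w$ but on its antiderivatives $\dx u = \int_{-\infty}^x w$ and $u = \int_{-\infty}^x \dx u$; the fixed-point argument must therefore be run for the full tuple $(u, \dx u, w)$ (resp.\ $(u, v)$), propagating boundedness of each primitive through the iteration. In particular one must recover the low-frequency $L^2$-regularity of $u$ — that is, $u, \dx u \in L^2(\R)$, which is not implied by $w \in \mathcal{X}^1$ alone — by carrying these components as bona fide unknowns and transporting the initial regularity through the equation. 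Matching this bookkeeping to the $\mathcal{X}^1 \times H^1$ formulation behind Theorem~\ref{WP}, and verifying that the reduction is reversible so that a solution of the reduced system indeed integrates back to a solution of \eqref{KdV2} with the stated regularity, is the part that requires the most care.
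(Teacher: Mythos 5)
Your proposal is correct and follows essentially the same route as the paper: differentiate twice (once when $c_4=0$) to obtain a KdV-type equation for $\dx^2 u$ whose top-order coefficient $c_1 u + c_3\dx u + 2c_4\dx^2 u$ has the bounded primitive $c_1\int_{-\infty}^x u\,dy + c_3 u + 2c_4\dx u$, then run the double gauge transformation and the contraction/a priori machinery of Theorem \ref{WP} for the full tuple of unknowns. The primitive you identify is exactly the paper's $\J+\K$, and your splitting into the cases $c_4\neq 0$ and $c_4=0$, with the corresponding $\mathcal{X}^3$ versus $\mathcal{X}^2$ thresholds, matches the paper's norms $\mathcal{Z}_T$ and $\mathcal{Z}'_T$.
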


\begin{rmk}
We can remove the boundedness of primitives if $c_1=0$.
More precisely, the Cauchy problem for \eqref{KdV2} is well-posed in $H^2(\R)$ and $H^3(\R)$ provided that $c_1=c_4=0$ and $c_1=0$, respectively.
\end{rmk}

\subsection{Notation}

We denote the set of nonnegative integers by $\N_0 := \N \cup \{ 0 \}$.
Let $P_N$ denote the (inhomogeneous) Littlewood-Paley decomposition:
\[
u = \sum_{N \in 2^{\N_0}} P_N u.
\]
Let $1 \le p,q \le \infty$ and $T>0$.
Define
\begin{align*}
& \| f \|_{L_x^p L_T^q} := \left( \int_{-\infty}^{\infty} \left( \int_{-T}^T |f(t,x)|^q dt \right)^{\frac{p}{q}} dx \right)^{\frac{1}{p}},
\\
& \| f \|_{L_T^q L_x^p} := \left( \int_{-T}^T \left( \int_{-\infty}^{\infty} |f(t,x)|^p dx \right)^{\frac{q}{p}} dt \right)^{\frac{1}{q}},
\end{align*}
with $T=t$ to indicate the case when $T=\infty$.

We set $\Lc := \dt + \frac{1}{3} \dx^3$.
Let $\lp (t)$ be the linear propagator of \eqref{KdV}, that is $\lp (t) := e^{-\frac{t}{3} \dx^3}$.

In estimates, we use $C$ to denote a positive constant that can change from line to line.
We write $A\lesssim B$ to mean $A \le CB$ if $C$ is absolute or depends only on parameters that are considered fixed.
We define $A \ll B$ to mean $A \le C^{-1} B$.

\section{Lemmas}

In this section, we collect some lemmas which are used in the proof.

The first lemma is the Strichartz estimate for the Airy equation.

\begin{lem}[Lemma 2.4 in \cite{KPV89}] \label{lem:Str}
Let $2 \le q,r \le \infty$ and $0 \le s \le \frac{1}{q}$ satisfy $-s+\frac{3}{q}+\frac{1}{r}=\frac{1}{2}$.
Then,
\[
\left\| |\dx|^s \lp (t) u_0 \right\|_{L_t^q L_x^r} \lesssim \| u_0 \|_{L^2}.
\]
\end{lem}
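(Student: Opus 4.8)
The plan is to derive the estimate from a fixed-time dispersive bound combined with fractional integration in time, via the standard $TT^*$ duality. Writing $Tu_0 := |\dx|^s \lp(t) u_0$, the desired bound $\|Tu_0\|_{L_t^q L_x^r} \lesssim \|u_0\|_{L^2}$ is equivalent, after composing with the adjoint, to
\[
\|TT^*F\|_{L_t^q L_x^r} \lesssim \|F\|_{L_t^{q'} L_x^{r'}}, \qquad TT^*F(t) = \int_\R |\dx|^{2s} \lp(t-t') F(t')\,dt',
\]
where $q',r'$ denote the H\"older conjugates. Thus it suffices to control the single-time operator $|\dx|^{2s}\lp(\tau)$ and then sum over $\tau = t-t'$ by a one-dimensional fractional integration.

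First I would establish the fixed-time dispersive estimate
\[
\bigl\| |\dx|^{2s} \lp(\tau) f \bigr\|_{L_x^r} \lesssim |\tau|^{-\frac{2}{q}} \|f\|_{L_x^{r'}}.
\]
Since $\lp(\tau)$ has Fourier multiplier $e^{i\tau \xi^3/3}$, the kernel of $|\dx|^{2s}\lp(\tau)$ is the oscillatory integral $G_\tau(x) = \int_\R |\xi|^{2s} e^{i(\tau \xi^3/3 + x\xi)}\,d\xi$, which by the rescaling $\xi \mapsto |\tau|^{-1/3}\xi$ satisfies $G_\tau(x) = |\tau|^{-(1+2s)/3} G_1(|\tau|^{-1/3}x)$. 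The power of $|\tau|$ in the target estimate is then fixed by scaling, and a short computation using $-s + \frac{3}{q} + \frac{1}{r} = \frac{1}{2}$ shows it equals $-\frac{2}{q}$. The content of the bound is therefore the $\tau = 1$ statement $\|G_1 * f\|_{L_x^r} \lesssim \|f\|_{L_x^{r'}}$, which I would obtain from a stationary phase / van der Corput analysis of $G_1$: the phase $\xi^3/3 + x\xi$ has nondegenerate stationary points giving Airy-type decay, and the restriction $0 \le s \le \frac{1}{q}$ is exactly what keeps the associated oscillatory-integral operator bounded in the required $L^{r'} \to L^r$ range.

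Finally, inserting the dispersive bound into the $TT^*$ kernel, taking the $L_x^r$ norm under the integral and then the $L_t^q$ norm, one is left with the convolution in $t$ of $\|F(\cdot)\|_{L_x^{r'}}$ against $|\tau|^{-2/q}$, which is handled by the Hardy--Littlewood--Sobolev inequality; its admissibility condition on $L_t^{q'} \to L_t^q$ is precisely $\lambda = \frac{2}{q}$ for the kernel $|\tau|^{-\lambda}$, matching the decay rate above. The degenerate endpoints I would treat separately: $q = \infty$ reduces to the conservation law $\|\lp(t)u_0\|_{L_t^\infty L_x^2} = \|u_0\|_{L^2}$ by Plancherel, and the boundary cases $q = 2$ or $r = \infty$ require the limiting form of the oscillatory estimate. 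The main obstacle is the sharp fixed-time dispersive estimate with the fractional derivative $|\dx|^{2s}$: crude kernel bounds via Young's inequality yield only a strict subrange, and capturing the full region up to $s = \frac{1}{q}$ requires genuine cancellation in $G_1$, that is, the oscillatory-integral estimates underlying the cited result, together with care at the endpoints where $G_1$ just fails to enjoy better integrability.
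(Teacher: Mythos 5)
The paper does not prove this lemma; it is quoted verbatim from [KPV89], so there is no internal proof to compare against. Your outline correctly reconstructs the standard argument of that reference: $TT^*$ duality, a fixed-time dispersive bound for $|\dx|^{2s}\lp(\tau)$ with decay $|\tau|^{-2/q}$ (your scaling computation of the exponent from $-s+\frac{3}{q}+\frac{1}{r}=\frac{1}{2}$ is consistent), Hardy--Littlewood--Sobolev in time with $\lambda=\frac{2}{q}$ (admissible since the hypotheses force $4\le q$, so $0<\frac{2}{q}\le\frac{1}{2}$, and the only degenerate case $q=\infty$ is Plancherel as you note). The one point worth making precise is how the fixed-time bound $\|G_1*f\|_{L^r}\lesssim\|f\|_{L^{r'}}$ is actually obtained: it cannot come from interpolating an $L^1\to L^\infty$ kernel bound against $L^2\to L^2$ boundedness of the same operator, because $|\dx|^{2s}\lp(1)$ is \emph{not} $L^2$-bounded for $s>0$, and as you observe Young's inequality on the van der Corput bound $|G_1(x)|\lesssim(1+|x|)^{s-1/4}$ only covers a strict subrange. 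The missing ingredient is Stein's analytic interpolation applied to the family $z\mapsto|\dx|^{z}\lp(\tau)$, between Plancherel at $\mathrm{Re}\,z=0$ and the kernel bound $\||\dx|^{1/2+i\beta}\lp(\tau)\|_{L^1\to L^\infty}\lesssim|\tau|^{-1/2}$ (uniform in $\beta$) at $\mathrm{Re}\,z=\frac{1}{2}$; this yields exactly the line $s=\frac{1}{q}$, and a further interpolation with the derivative-free Strichartz line $\frac{3}{q}+\frac{1}{r}=\frac{1}{2}$ gives the full region $0\le s\le\frac{1}{q}$. (Kenig--Ponce--Vega organize this slightly differently, proving the single endpoint $(q,r,s)=(4,\infty,\frac{1}{4})$ by $TT^*$ and then Stein-interpolating the space-time estimates, but the two organizations are equivalent.) With that clarification your sketch is a faithful account of the cited proof.
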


The second lemma is the local smoothing effect of Kato-type (see, for example, Theorem 3.5 in \cite{KPV93}).

\begin{lem} \label{lem:Kato}
For any $u_0 \in L^2(\R)$, we have
\[
\| \dx \lp (t) u_0 \|_{L_x^{\infty} L_t^2} \lesssim \| u_0 \|_{L^2}.
\]
\end{lem}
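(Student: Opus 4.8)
The plan is to pass to the Fourier side and reduce everything to Plancherel's theorem in the time variable, exploiting that the cubic dispersion gains exactly one derivative. By a standard density argument it suffices to prove the bound for $u_0$ in the Schwartz class, which makes all of the Fourier manipulations below rigorous. Recalling that $\lp(t)$ has Fourier multiplier $e^{\frac{it\xi^3}{3}}$, I would write, for fixed $x \in \R$,
\[
\dx \lp(t) u_0(x) = \frac{1}{2\pi}\int_{\R} i\xi\, e^{ix\xi}\, e^{\frac{it\xi^3}{3}}\, \wh{u_0}(\xi)\, d\xi,
\]
and view the right-hand side, for each fixed $x$, as a function of $t$ alone.

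First I would perform the change of variables $\tau = \frac{1}{3}\xi^3$, which is a smooth increasing bijection of $\R$ with $d\tau = \xi^2\, d\xi$; writing $\xi = \xi(\tau) = (3\tau)^{1/3}$, this recasts the integral as
\[
\dx \lp(t) u_0(x) = \frac{1}{2\pi}\int_{\R} \frac{i}{\xi(\tau)}\, e^{ix\xi(\tau)}\, \wh{u_0}(\xi(\tau))\, e^{it\tau}\, d\tau,
\]
so that $\dx \lp(t)u_0(x)$ is, up to a constant, the inverse Fourier transform in $t$ of the function $\tau \mapsto \frac{i}{\xi(\tau)}\, e^{ix\xi(\tau)}\, \wh{u_0}(\xi(\tau))$. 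Applying Plancherel in $t$ for this fixed $x$ then gives
\[
\left\| \dx \lp(t) u_0(x) \right\|_{L_t^2}^2 = \frac{1}{2\pi}\int_{\R}\left| \frac{1}{\xi(\tau)}\, e^{ix\xi(\tau)}\, \wh{u_0}(\xi(\tau)) \right|^2 d\tau.
\]

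The two crucial observations are now visible. The factor $e^{ix\xi(\tau)}$ has modulus one, so it drops out and the resulting quantity is independent of $x$; this is precisely what yields a bound uniform in $x$, hence control of the $L_x^{\infty}$ norm. Changing variables back via $d\tau = \xi^2\, d\xi$, the singular weight $|\xi|^{-2}$ is exactly cancelled by the Jacobian $\xi^2$, leaving
\[
\left\| \dx \lp(t) u_0(x) \right\|_{L_t^2}^2 = \frac{1}{2\pi}\int_{\R} |\wh{u_0}(\xi)|^2\, d\xi \lesssim \| u_0 \|_{L^2}^2,
\]
uniformly in $x$; taking the supremum over $x$ closes the estimate. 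The point to handle with care is this change of variables: the map $\xi \mapsto \frac{1}{3}\xi^3$ degenerates at the origin, where the weight $|\xi|^{-2}$ is singular, and one must check that the singularity is integrable against $d\tau$ and is annihilated upon returning to $\xi$. Since the relevant cancellations occur on $\R \setminus \{0\}$, a set of full measure, and the gained factor $i\xi$ is exactly what compensates the degeneracy of the Jacobian, the computation is justified, and the density reduction to Schwartz data at the outset completes the argument.
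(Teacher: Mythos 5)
Your proof is correct: the substitution $\tau=\tfrac{1}{3}\xi^3$ followed by Plancherel in $t$ for fixed $x$ is precisely the standard argument for this sharp Kato smoothing identity, and it is the proof of the result (Theorem 3.5 in \cite{KPV93}) that the paper simply cites for this lemma without reproducing an argument. The one delicate point, the degeneracy of the Jacobian at $\xi=0$, you flag and resolve correctly, since the factor $i\xi$ gained from $\dx$ exactly cancels it and the identity even holds with equality of norms.
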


The third lemma is the maximal function estimates.

\begin{lem}[Corollary 2.9 in \cite{KPV91}] \label{lem:KL} 
Let $s>\frac{3}{4}$.
Then for any $u_0 \in H^s(\R)$ and any $\rho>\frac{3}{4}$,
\[
\| \lp (t) u_0 \|_{L_x^2 L_T^{\infty}} \lesssim \lr{T}^{\rho} \| u_0 \|_{H^s}.
\]
\end{lem}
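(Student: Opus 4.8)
The plan is to regard Lemma~\ref{lem:KL} as a maximal function estimate for the Airy propagator $\lp(t)=e^{-\frac{t}{3}\dx^3}$ and to prove it by an oscillatory integral argument, after reducing to a single dyadic frequency and a unit time interval. First I would use that the supremum is subadditive, so that the triangle inequality in $L_x^2 L_T^\infty$ gives
\[
\| \lp(t) u_0 \|_{L_x^2 L_T^\infty} \le \sum_{N} \| \lp(t) P_N u_0 \|_{L_x^2 L_T^\infty},
\]
and reduce matters to the frequency-localized estimate
\[
\| \lp(t) P_N u_0 \|_{L_x^2 L_T^\infty} \lesssim \lr{T}^{\rho} N^{\frac34} \| P_N u_0 \|_{L^2}
\]
for each dyadic $N$. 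Granting this, Cauchy--Schwarz yields $\sum_N N^{3/4}\|P_N u_0\|_{L^2}\lesssim \|u_0\|_{H^s}$ precisely when $s>\frac34$, since $\sum_N N^{2(3/4-s)}<\infty$ in that range; this is where the threshold $s>\frac34$ enters.

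Next I would remove the dependence on $T$. Because $\lp$ is a unitary group that preserves frequency supports, $\|\lp(t)P_N u_0\|_{L_x^2 L_{[a,a+1]}^\infty}$ is independent of $a$, so covering $[-T,T]$ by $O(\lr{T})$ unit intervals and summing the squares reduces everything to the estimate on $[0,1]$ at the cost of a factor $\lr{T}^{1/2}$; any $\rho>\frac34$ (indeed $\rho=\frac12$) then suffices. Writing $v_0=P_N u_0$ and $\psi_N$ for a bump supported on $|\xi|\sim N$, the core is
\[
\| \lp(t) v_0 \|_{L_x^2 L_{[0,1]}^\infty} \lesssim N^{\frac34} \| v_0 \|_{L^2}.
\]

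To prove this I would linearize the supremum: choose a measurable $t\colon \R\to[0,1]$ that (almost) attains it and estimate, uniformly in the choice of $t(\cdot)$, the operator
\[
(Af)(x) = \int_{\R} e^{i(x\xi + \frac{1}{3}t(x)\xi^3)} \psi_N(\xi) \wh{f}(\xi)\, d\xi .
\]
I would pass to $AA^*$, whose kernel is
\[
K(x,x') = \int_{\R} e^{i[(x-x')\xi + \frac13(t(x)-t(x'))\xi^3]} |\psi_N(\xi)|^2\, d\xi,
\]
and bound $K$ by van der Corput's lemma. With $b=t(x)-t(x')$, the phase has $\phi''(\xi)\sim |b|N$ and a stationary point inside $|\xi|\sim N$ only when $|x-x'|\sim |b|N^2$; there $|K(x,x')|\lesssim (|b|N)^{-1/2}$, competing with the trivial bound $|K|\lesssim N$, while off the stationary set repeated integration by parts gives rapid decay. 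A Schur test $\sup_x \int_{\R}|K(x,x')|\,dx'\lesssim N^{3/2}$ then gives $\|A\|_{L^2\to L^2}\lesssim N^{3/4}$, as required.

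The main obstacle is the kernel bookkeeping in the last step: since $b=t(x)-t(x')$ depends on $x'$, the stationary-phase bound $(|b|N)^{-1/2}$ must be integrated over a region whose size also depends on $b$, balancing against the trivial bound $N$ where $|b|$ is small; carrying this out uniformly in the measurable selection $t(\cdot)$, and verifying that the exponent $\frac34$ is sharp, is exactly the oscillatory integral analysis behind Corollary~2.9 in \cite{KPV91}. It is worth noting that the cruder route through the Sobolev embedding $L_T^\infty \hookrightarrow H_T^{1/2+}$ together with $\dt=-\frac13\dx^3$ loses a full power $N^{3/2}$ and is therefore insufficient, which is precisely why the stationary phase estimate is needed.
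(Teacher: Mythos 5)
The paper offers no proof of this lemma: it is imported verbatim as Corollary 2.9 of \cite{KPV91}, so the only ``paper proof'' is the citation. Your sketch is essentially the standard argument behind that corollary (Littlewood--Paley reduction, translation in time to a unit interval, linearization of the supremum, $TT^*$ plus van der Corput, Schur test), and the numerology is right: the dyadic bound $N^{3/4}$ dualizes against $\sum_N N^{2(3/4-s)}<\infty$ exactly for $s>\frac34$, and the unit-interval covering costs only $\lr{T}^{1/2}\le\lr{T}^{\rho}$. The one step you assert rather than carry out --- the Schur bound with $b=t(x)-t(x')$ depending on $x'$ --- does close, and it is worth recording why: for fixed $x$ the near-diagonal region $|x-x'|\le N^{-1}$ contributes $N\cdot N^{-1}=O(1)$; off the diagonal, either there is no stationary point in $\{|\xi|\sim N\}$ and repeated integration by parts gives an integrable tail, or else $|x-x'|\sim |b|N^2\lesssim N^2$ and van der Corput gives $|K(x,x')|\lesssim (|b|N)^{-1/2}\sim N^{1/2}|x-x'|^{-1/2}$, so that
\[
\sup_x\int_{\R}|K(x,x')|\,dx'
\lesssim 1+\int_{N^{-1}\le |x-x'|\lesssim N^2} N^{1/2}|x-x'|^{-1/2}\,dx'
\lesssim N^{3/2},
\]
uniformly in the measurable selection $t(\cdot)$, whence $\|A\|_{L^2\to L^2}\lesssim N^{3/4}$. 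Two cosmetic caveats: the lowest dyadic block should be handled by the trivial/non-stationary bounds alone (since $\phi''=2b\xi$ degenerates at $\xi=0$), and sharpness of the exponent $\frac34$ is not needed for the lemma. None of this affects correctness; your proposal is a faithful reconstruction of the cited proof rather than a genuinely different route.
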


\section{Well-posedness via the contraction mapping theorem} \label{S:WPX0}

In this section, by using the iteration argument, we show that \eqref{KdV} is locally well-posed in $\mathcal{X}^2$.

First, we observe some formal calculations.
Let $\Lambda$ and $v$ be real valued functions.
A direct calculation shows
\begin{equation} \label{gauge}
e^{\Lambda} \Lc \left( e^{-\Lambda} v\right)
= \Lc v + \left( \dx \Lambda \dx^2 \Lambda -\frac{1}{3} (\dx \Lambda)^3 - \Lc \Lambda \right) v + \left( -\dx^2 \Lambda + (\dx \Lambda)^2 \right) \dx v - \dx \Lambda \dx^2 v.
\end{equation}
Let $u$ be a solution to \eqref{KdV} and set $v= \dx u$.
Then, \eqref{KdV} yields
\[
\Lc v = \dx \Lc u = (c_1+2c_2) \dx u \dx^2 u + c_1 u \dx^3 u.
\]
To cancel out the worst part, we set $\Lambda (t,x) =c_1 \int_{-\infty}^x u(t,y)dy$.
Since
\begin{equation} \label{LLambda}
\begin{aligned}
\Lc \Lambda
&= c_1 \int_{-\infty}^x (\Lc u)(t,y) dy
= c_1^2 \int_{-\infty}^x (u \dy^2 u) dy + c_1c_2 \int_{-\infty}^x (\dy u)^2 dy \\
&= c_1^2 u \dx u + c_1(-c_1+c_2) \int_{-\infty}^x (\dy u)^2 dy,
\end{aligned}
\end{equation}
\eqref{gauge} with $v=\dx u$ leads to the following:
\[
e^{\Lambda} \Lc \left( e^{-\Lambda} \dx u \right)
= 2c_2 \dx u \dx^2 u + c_1^2 u^2 \dx^2 u+ c_1(c_1-c_2) \dx u \int_{-\infty}^x (\dy u)^2 dy - \frac{c_1^3}{3} u^3 \dx u.
\]
Hence, by setting $\vf := e^{-\Lambda} \dx u$, we have
\begin{align}
&\Lc u = c_1 e^{\Lambda} u \left( \dx \vf + c_1 u \vf \right) + c_2 e^{2\Lambda} \vf^2, \label{eq:u} \\
\label{eq:vf}
&\Lc \vf
= 2 c_2 e^{\Lambda} \vf \left( \dx \vf + c_1 u \vf \right) + c_1^2 u^2 \dx \vf + c_1 (c_1-c_2) \vf \int_{-\infty}^x e^{2\Lambda} \vf^2 dy + \frac{2}{3} c_1^3 u^3 \vf.
\end{align}

\subsection{Proof of Theorem \ref{WP} with $s=2$}

Let $\eps>0$ be sufficiently small.
We define the function space $X_T$ for $T>0$ by
\begin{align*}
X_T &:= \{ f \in L^{\infty} ([-T,T];L^2 (\R)): \| f \|_{X_T} <\infty \}, \\
\| f \|_{X_T} &:= \| f \|_{L_T^{\infty} L_x^2} + \| f \|_{L_T^6 L_x^{\infty}} + \| \dx f \|_{L_x^{\infty} L_T^2} + \left\| \lr{\dx}^{-\frac{3}{4}-\eps} f \right\|_{L_x^2 L_T^{\infty}}.
\end{align*}
Lemmas \ref{lem:Str}--\ref{lem:KL} yield that
\begin{equation} \label{est:lin}
\| \lp (t) u_0 \|_{X_T} \le C_1 \| u_0 \|_{L^2}
\end{equation}
for $0<T<1$.
In addition, an interpolation shows that
\[
\| u \|_{L_T^q L_x^r} \lesssim \| u \|_{X_T}
\]
for any $2 \le q,r \le \infty$ with $\frac{3}{q}+\frac{1}{r}=\frac{1}{2}$.
In particular, $(q,r)= (12,4), (9,6), (8,8)$ are allowed.
Furthermore, for such $(q,r)$, $1\le q'<q$, and $0<T<1$, we have
\[
\|u\|_{L^{q'}_TL^r_x}\le T^{\frac{1}{q'}-\frac{1}{q}}\|u\|_{L^q_TL^r_x}
\lesssim \|u\|_{X_T}.
\]

We will apply the contraction mapping theorem in the space
\[
Y_T := \left\{ (u,\vf) \in X_T \times X_T \colon \lr{\dx}u \in X_T, \, \lr{\dx} \vf \in X_T, \, \sup_{\substack{|t|\le T \\ x \in \R}} \left| \int_{-\infty}^x u(t,y) dy \right| <\infty \right\}
\]
equipped with the norm
\[
\| (u, \vf) \|_{Y_T} := \| \lr{\dx} u \|_{X_T} + \| \lr{\dx} \vf \|_{X_T} + \sup_{|t| \le T, x \in \R} \left| \int_{-\infty}^x u(t,y) dy \right|.
\]

We define $\Psi_{u_0} (u,\vf) := \left( \Psi_{u_0}^{(1)} (u,\vf), \Psi_{u_0}^{(2)} (u,\vf) \right)$ by
\begin{align*}
\Psi_{u_0}^{(1)} (u,\vf) &:= \lp (t) u_0 + \int_0^t \lp (t-t') \left\{ c_1 e^{\Lambda} u \left( \dx \vf + c_1 u \vf \right) + c_2 e^{2\Lambda} \vf^2 \right\} (t',x) dt', \\
\Psi_{u_0}^{(2)} (u,\vf) &:= \lp (t) \vf_0 + \int_0^t \lp (t-t') \bigg\{ 2 c_2 e^{\Lambda} \vf \left( \dx \vf + c_1 u \vf \right)+ c_1^2 u^2 \dx \vf \\
&\qquad  + c_1 (c_1-c_2) \vf \int_{-\infty}^x e^{2\Lambda} \vf^2 dy + \frac{2}{3} c_1^3 u^3 \vf \bigg\} (t',x) dt',
\end{align*}
where $\Lambda (t,x) := c_1 \int_{-\infty}^x u (t,y)dy$ and $\vf_0 := e^{-c_1 \int_{-\infty}^x u_0(y) dy} \dx u_0$.

Let $0<T < 1$ be determined later.
Then, H\"older's inequality yields that
\begin{align*}
&
\begin{aligned}
\| u \dx \vf \|_{L_T^2 H_x^1}
&\lesssim \| u \dx^2 \vf \|_{L_{T,x}^2} + \| \dx u \dx \vf \|_{L_{T,x}^2} + \| u \dx \vf \|_{L_{T,x}^2}\\
&\lesssim \| u \|_{L_x^2 L_T^{\infty}} \| \dx^2 \vf \|_{L_x^{\infty} L_T^2} + \| \dx u \|_{L_{T,x}^4} \| \dx \vf \|_{L_{T,x}^4} + \| u \|_{L_x^2 L_T^{\infty}} \| \dx \vf \|_{L_x^{\infty} L_T^2} \\
&\lesssim \| \lr{\dx} u \|_{X_T} \| \lr{\dx} \vf \|_{X_T},
\end{aligned}
\\
&
\begin{aligned}
\| u^2 \vf \|_{L_T^2 H_x^1}
&\lesssim \| u^2 \dx \vf \|_{L_{T,x}^2} + \| u \dx u \vf \|_{L_{T,x}^2} + \| u^2 \vf \|_{L_{T,x}^2} \\
&\lesssim \| u \|_{L_{T,x}^{\infty}} \| u \|_{L_x^2 L_T^{\infty}} \| \dx \vf \|_{L_x^{\infty} L_T^2} + \| u \|_{L_{T,x}^{\infty}} \| \lr{\dx} u \|_{L_{T,x}^4} \| \vf \|_{L_{T,x}^4} \\
&\lesssim \| \lr{\dx} u \|_{X_T}^2 \| \lr{\dx} \vf \|_{X_T},
\end{aligned}
\\
&
\begin{aligned}
\| \vf^2 \|_{L_T^2 H_x^1}
\lesssim \| \vf \dx \vf \|_{L_{T,x}^2} + \| \vf^2 \|_{L_{T,x}^2}
\lesssim \| \vf \|_{L_{T,x}^{\infty}} \| \lr{\dx} \vf \|_{L_T^{\infty} L_x^2}
\lesssim \| \lr{\dx} \vf \|_{X_T}^2.
\end{aligned}
\end{align*}
Since
\begin{equation} \label{product}
\| e^{\Lambda} f \|_{H^1}
\lesssim \| e^{\Lambda} \|_{L_{T,x}^{\infty}} \left( \| u \|_{L_{T,x}^{\infty}}+1 \right) \| f \|_{H^1}
\lesssim \| e^{\Lambda} \|_{L_{T,x}^{\infty}} \left( \| \lr{\dx} u \|_{X_T}+1 \right)  \| f \|_{H^1},
\end{equation}
we use \eqref{est:lin} to obtain the following:
\begin{equation} \label{Phiest1}
\begin{aligned}
&\| \lr{\dx} \Phi_{u_0}^{(1)} (u,\vf) \|_{X_T} - C_1 \| u_0 \|_{H^1} \\
&\lesssim T^{\frac{1}{2}} \left( \| e^{\Lambda} u \dx \vf \|_{L_T^2 H_x^1} + \| e^{\Lambda} u^2 \vf \|_{L_T^2 H_x^1} + \| e^{2\Lambda} \vf^2 \|_{L_T^2 H_x^1} \right) \\
&\lesssim T^{\frac{1}{2}} \left( \| e^{\Lambda} \|_{L_{T,x}^{\infty}} + \| e^{2\Lambda} \|_{L_{T,x}^{\infty}} \right) \left( \| (u,\vf) \|_{Y_T}^2 + \| (u,\vf) \|_{Y_T}^4 \right).
\end{aligned}
\end{equation}
Moreover, we observe the following estimates:
\begin{align*}
&
\begin{aligned}
\| \vf \dx \vf \|_{L_T^2 H_x^1}
&\lesssim \| \vf \dx^2 \vf \|_{L_{T,x}^2} + \| (\dx \vf)^2 \|_{L_{T,x}^2} + \| \vf \dx \vf \|_{L_{T,x}^2} \\
&\lesssim \| \vf \|_{L_x^2 L_T^{\infty}} \| \dx^2 \vf \|_{L_x^{\infty} L_T^2} + \| \dx \vf \|_{L_{T,x}^4}^2 + \| \vf \|_{L_{T,x}^{\infty}} \| \dx \vf \|_{L_T^{\infty} L_x^2} \\
&\lesssim \| \lr{\dx} \vf \|_{X_T}^2,
\end{aligned}
\\
&
\begin{aligned}
&\| u \vf^2 \|_{L_T^2 H_x^1} \\
&\quad \lesssim \| u \vf \dx \vf \|_{L_{T,x}^2} + \| \dx u \vf^2 \|_{L_{T,x}^2} + \| u \vf^2 \|_{L_{T,x}^2} \\
&\quad \lesssim \| \vf \|_{L_{T,x}^{\infty}} \left( \| u \|_{L_x^2 L_T^{\infty}} \| \dx \vf \|_{L_x^{\infty} L_T^2} + \| \dx u \|_{L_x^{\infty} L_T^2} \| \vf \|_{L_x^2 L_T^{\infty}} + \| u \|_{L_{T,x}^{\infty}} \| \vf \|_{L_T^{\infty} L_x^2} \right) \\
&\quad \lesssim \| \lr{\dx} u \|_{X_T} \| \lr{\dx} \vf \|_{X_T}^2,
\end{aligned}
\\
&
\begin{aligned}
&\| u^2 \dx \vf \|_{L_T^2 H_x^1} \\
&\quad \lesssim \| u^2 \dx^2 \vf \|_{L_{T,x}^2} + \| u \dx u \dx \vf \|_{L_{T,x}^2} + \| u^2 \dx \vf \|_{L_{T,x}^2} \\
&\quad \lesssim \| u \|_{L_{T,x}^{\infty}} \left( \| u \|_{L_x^2 L_T^{\infty}} \| \dx^2 \vf \|_{L_x^{\infty} L_T^2} + \| \dx u \|_{L_{T,x}^4} \| \dx \vf \|_{L_{T,x}^4} + \| u \|_{L_{T,x}^{\infty}} \| \dx \vf \|_{L_T^{\infty} L_x^2} \right) \\
&\quad \lesssim \| \lr{\dx} u \|_{X_T}^2 \| \lr{\dx} \vf \|_{X_T},
\end{aligned}
\\
&
\begin{aligned}
&\left\| \vf \int_{-\infty}^x e^{2\Lambda} \vf^2 dy \right\|_{L_T^2 H_x^1} \\
&\lesssim \left\| \dx \vf \int_{-\infty}^x e^{2\Lambda} \vf^2 dy \right\|_{L_{T,x}^2} + \| e^{2\Lambda} \vf^3 \|_{L_{T,x}^2} +  \left\| \vf \int_{-\infty}^x e^{2\Lambda} \vf^2 dy \right\|_{L_{T,x}^2} \\
&\lesssim \| e^{2\Lambda} \|_{L_{T,x}^{\infty}} \left( \| \vf \|_{L_T^{\infty} L_x^2}^2 \| \dx \vf \|_{L_T^{\infty} L_x^2} + \| \vf \|_{L_{T,x}^6}^3 + \| \vf \|_{L_T^{\infty} L_x^2}^3 \right) \\
&\lesssim \| e^{2\Lambda} \|_{L_{T,x}^{\infty}} \| \lr{\dx} \vf \|_{X_T}^3,
\end{aligned}
\\
&
\begin{aligned}
\| u^3 \vf \|_{L_T^2 H_x^1}
&\lesssim \| u^3 \dx \vf \|_{L_{T,x}^2} + \| u^2 \dx u \vf \|_{L_{T,x}^2} + \| u^3 \vf \|_{L_{T,x}^2} \\
&\lesssim \| u \|_{L_{T,x}^{\infty}}^3  \| \dx \vf \|_{L_T^{\infty} L_x^2} + \| u \|_{L_{T,x}^{\infty}}^2 \| \dx u \|_{L_T^{\infty} L_x^2} \| \vf \|_{L_{T,x}^{\infty}} + \| u \|_{L_{T,x}^{\infty}}^3 \| \vf \|_{L_T^{\infty} L_x^2} \\
&\lesssim \| \lr{\dx} u \|_{X_T}^3 \| \lr{\dx} \vf \|_{X_T}.
\end{aligned}
\end{align*}
Accordingly, \eqref{est:lin} and \eqref{product} imply that
\begin{equation} \label{Phiest2}
\begin{aligned}
&\| \lr{\dx} \Phi_{u_0}^{(2)} (u,\vf) \|_{X_T} - C_1 \| \vf_0 \|_{H^1} \\
&\lesssim T^{\frac{1}{2}} \Bigg( \| e^{\Lambda} \vf \dx \vf \| _{L_T^2 H_x^1} + \| e^{\Lambda} u \vf^2 \|_{L_T^2 H_x^1} + \| u^2 \dx \vf \|_{L_T^2 H_x^1} + \left\| \vf \int_{-\infty}^x e^{2\Lambda} \vf^2  dy \right\|_{L_T^2 H_x^1} \\
&\qquad \qquad + \| u^3 \vf \|_{L_T^2 H_x^1} \Bigg) \\
&\lesssim T^{\frac{1}{2}} \left( e^{2\| \Lambda \|_{L_{T,x}^{\infty}}} + 1 \right) \left( \| (u,\vf) \|_{Y_T}^2 + \| (u,\vf) \|_{Y_T}^4 \right).
\end{aligned}
\end{equation}

Since $\Psi_{u_0}^{(1)} (u,\vf)$ satisfies
\[
\left(\partial_t+\frac{1}{3}\partial_x^3\right)\Psi_{u_0}^{(1)} (u,\vf)
=c_1 e^{\Lambda} u \left( \dx \vf + c_1 u \vf \right) + c_2 e^{2\Lambda} \vf^2, 
\]
the fundamental theorem of calculus shows
\begin{equation} \label{eq:gauge0}
\begin{aligned}
&\int_{-\infty}^x \Psi_{u_0}^{(1)} (u,\vf) (t,y) dy - \int_{-\infty}^x u_0(y) dy \\
&= \int_0^t \frac{d}{d \tau} \int_{-\infty}^x \Psi_{u_0}^{(1)} (u,\vf) (\tau,y) dy d\tau \\
&= - \frac{1}{3} \int_0^t \dx^2 \Psi_{u_0}^{(1)} (u,\vf) (\tau,x)  d\tau + \int_0^t \int_{-\infty}^x \left( c_1 e^{\Lambda} u \left( \dx \vf + c_1 u \vf \right) + c_2 e^{2\Lambda} \vf^2 \right) dy d\tau \\
&= - \frac{1}{3} \int_0^t \dx^2 \Psi_{u_0}^{(1)} (u,\vf) (\tau,x)  d\tau + c_1 \int_0^t e^{\Lambda} u \vf d\tau - c_1 \int_0^t \int_{-\infty}^x c_1 e^{\Lambda} \dx u \vf dy d\tau \\
&\qquad + c_2 \int_0^t \int_{-\infty}^x e^{2\Lambda} \vf^2 dy d\tau,
\end{aligned}
\end{equation}
which leads to the following:
\begin{equation}
\begin{aligned}
&\left\| \int_{-\infty}^x \Psi_{u_0}^{(1)} (u,\vf) (t,y) dy \right\|_{L_{T,x}^{\infty}} - \left\| \int_{-\infty}^x u_0(y) dy \right\|_{L_x^{\infty}} \\
&\lesssim \| \dx^2 \Phi_{u_0}^{(1)} (u,\vf) \|_{L_x^{\infty} L_T^1} + \| e^{\Lambda} u \vf \|_{L_x^{\infty} L_T^1} + \| e^{\Lambda} \dx u \vf \|_{L_{T,x}^1} + \| e^{2\Lambda} \vf^2 \|_{L_{T,x}^1} \\
&\lesssim T^{\frac{1}{2}} \Big( \| \dx^2 \Phi_{u_0}^{(1)} (u,\vf) \|_{L_x^{\infty} L_T^2} + \| e^{\Lambda} \|_{L^{\infty}_{T,x}} \| u \|_{L_{T,x}^{\infty}} \| \vf \|_{L_T^{\infty} L_x^{\infty}} \\
&\qquad + \| e^{\Lambda} \|_{L^{\infty}_{T,x}} \| \dx u \|_{L_T^{\infty} L_x^2} \| \vf \|_{L_T^{\infty} L_x^2} + \| e^{2\Lambda} \|_{L^{\infty}_{T,x}} \| \vf \|_{L_T^{\infty} L_x^2}^2 \Big) \\
&\le C_0 T^{\frac{1}{2}} \left( \| \dx \Phi_{u_0}^{(1)} (u,\vf) \|_{X_T} + \| e^{\Lambda} \|_{L^{\infty}_{T,x}} \| \lr{\dx} u \|_{X_T} \| \lr{\dx} \vf \|_{X_T} + \| e^{2\Lambda} \|_{L^{\infty}_{T,x}} \| \vf \|_{X_T}^2 \right).
\end{aligned}
\label{justLambda}
\end{equation}

Therefore, \eqref{Phiest1}, \eqref{Phiest2}, and \eqref{justLambda} yield that
\begin{equation} \label{Phiest3}
\begin{aligned}
&\| \Phi_{u_0} (u,\vf) \|_{Y_T} \\
&\le 2C_1 \left( \| u_0 \|_{\mathcal{X}^1} + \| \vf_0 \|_{H^1} \right) + C_2 T^{\frac{1}{2}} e^{2 |c_1| \| (u,\vf) \|_{Y_T}} \left( \| (u,\vf) \|_{Y_T}^2 + \| (u,\vf) \|_{Y_T}^4 \right)
\end{aligned}
\end{equation}
provided that $C_0 T^{\frac{1}{2}} < \frac{1}{2}$.
A similar calculation leads to the estimate for the difference.

Here, we set a closed ball $B_T$ of $Y_T$ by
\[
B_T := \left\{ (u,\vf) \in Y_T \colon \| (u,\vf) \|_{Y_T} \le 3C_1 ( \| u_0 \|_{\mathcal{X}^1} + \| \vf_0 \|_{H^1}) \right\}.
\]
Then, $\Phi_{u_0}$ is a contraction mapping on $B_T$ if $T$ is small depending only on $\| u_0 \|_{\mathcal{X}^1}$ and $\| \vf_0 \|_{H^1}$.

If $u_0 \in \mathcal{X}^2$, we have $\vf_0 = e^{-c_1 \int_{-\infty}^x u_0 (y)dy} \dx u_0 \in H^1 (\R)$.
Because $(u,\vf)$ is a solution to \eqref{eq:u}--\eqref{eq:vf}, the equation $\vf (t,x) = e^{-c_1 \int_{-\infty}^x  u(t,y) dy} \dx u(t,x)$ holds, which implies the well-posedness in $\mathcal{X}^2$ of the Cauchy problem for \eqref{KdV}.

For the reader's convenience, we give the proof of this fact.
Let $w := \dx u - e^{\Lambda} \vf$.
By \eqref{eq:u}, a direct calculation shows that
\begin{align*}
\Lc \dx u &= e^{\Lambda} \left( c_1 u \dx^2 \vf + c_1 \dx u \dx \vf + 2 c_1^2 u^2 \dx \vf + 2c_1^2 u \dx u \vf  + c_1^3 u^3 \vf \right) \\
&\quad + 2 c_2 e^{2\Lambda} \left( \vf \dx \vf + c_1 u \vf^2 \right), \\
\int_{-\infty}^x \Lc u dy &= - c_1 \int_{-\infty}^x e^{\Lambda} \dy u \vf dy +c_1 e^{\Lambda} u \vf + c_2 \int_{-\infty}^x e^{2\Lambda} \vf^2 dy \\
&= -c_1 \int_{-\infty}^x e^{\Lambda} \vf w dy + c_1 e^{\Lambda} u \vf - (c_1-c_2) \int_{-\infty}^x e^{2\Lambda} \vf^2 dy.
\end{align*}
From \eqref{gauge} and \eqref{eq:vf}, we have
\begin{align*}
e^{-\Lambda} \Lc (e^{\Lambda} \vf)
&= \Lc \vf + \left( c_1^2 u \dx u + \frac{c_1^3}{3} u^3 + \Lc \Lambda \right) \vf + \left(  c_1 \dx u + c_1^2 u^2 \right) \dx \vf + c_1 u \dx^2 \vf \\
&= 2c_2 e^{\Lambda} \vf ( \dx \vf + c_1 u \vf) + 2c_1^2 u^2 \dx \vf + c_1^3 u^3 \vf \\
&\quad + \left( c_1^2 u \dx u -c_1^2 \int_{-\infty}^x e^{\Lambda} \vf w dy + c_1^2 e^{\Lambda} u \vf \right) \vf + c_1 \dx u \dx \vf + c_1 u \dx^2 \vf
\end{align*}
Accordingly, we obtain
\[
\Lc w
= c_1^2 e^{\Lambda} u \vf w + c_1^2 e^{\Lambda} \vf \int_{-\infty}^x e^{\Lambda} \vf w dy.
\]
The same calculation as in \eqref{Phiest2} leads to
\[
\| w \|_{X_T}
\lesssim T^{\frac{1}{2}} e^{2 \| \Lambda \|_{L_{T,x}^{\infty}}}  \left( \| u \|_{X_T} + \| \vf \|_{X_T} \right) \| \vf \|_{X_T} \| w \|_{X_T}.
\]
By $w(0)=0$, the standard continuity argument shows that $w(t) =0$ for $|t| \le T$.
Therefore, we obtain that $\vf (t,x) = e^{-c_1 \int_{-\infty}^x  u(t,y) dy} \dx u(t,x)$ for $|t| \le T$.

\section{Well-posedness for \eqref{KdV} in $\mathcal{X}^1$} \label{S:WPX1}

We first consider the special case $c_2=0$, because the general case is a bit complicated.
In \S \ref{S:WPX11}, we show the well-posedness in $\mathcal{X}^1$ under $c_2=0$.
In \S \ref{S:WPX12}, we observe the persistency of regularity for $c_2=0$.
Finally, in \S \ref{S:WPX13}, we prove Theorem \ref{WP} without $c_2=0$.

\subsection{Proof of Theorem \ref{WP} under $c_2=0$} \label{S:WPX11}

Let $c_2=0$ and $u_0 \in \mathcal{X}^1$.
Then, there exists a sequence $\{ u_{0,n} \} \subset \mathcal{X}^2$ such that $u_{0,n}$ converges to $u_0$ in $\mathcal{X}^1$.
Without loss of generality, we may assume that $\| u_{0,n} \|_{\mathcal{X}^1} \le 2 \| u_0 \|_{\mathcal{X}^1}$ holds for any $n \in \N$.
By the well-posedness in $\mathcal{X}^2$, there exist $T_n>0$ and the solution $u_n \in C([-T_n,T_n]; \mathcal{X}^2)$, where $T_n$ depends on $\| u_{0,n} \|_{\mathcal{X}^2}$.

Set $\Lambda_n (t,x) := c_1 \int_{-\infty}^x u_n(t,y) dy$ and $\vf_n := e^{-\Lambda_n} \dx u_n$.
First, we observe the following bound.

\begin{lem} \label{lem:unbound}
\[
\| u_n \|_{L_x^2 L_T^{\infty}}
\lesssim e^{\frac{3}{2} \| \Lambda_n \|_{L_{T,x}^{\infty}}} \left( \| u_n \|_{X_T} + \| u_n \|_{X_T}^2 + \| \vf_n \|_{X_T}^2 \right).
\]
\end{lem}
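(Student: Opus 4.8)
The key structural fact I would exploit is the defining relation $\dx u_n = e^{\Lambda_n}\vf_n$ together with $\dx \Lambda_n = c_1 u_n$, which let me reconstruct $u_n$ from the gauge variable while keeping the bounded factor $e^{\Lambda_n}$ under control. Since the $X_T$-norm only supplies $\| \lr{\dx}^{-3/4-\eps} u_n \|_{L_x^2 L_T^\infty} \le \| u_n \|_{X_T}$, the first move is to split $u_n = P_{\le 1} u_n + \sum_{N \ge 2} P_N u_n$ dyadically. For the low block $\lr{\dx}^{3/4+\eps} P_{\le 1}$ is a Fourier multiplier with frequency support bounded away from infinity, hence convolution in $x$ with an $L_x^1$-kernel, so $\| P_{\le 1} u_n \|_{L_x^2 L_T^\infty} \lesssim \| \lr{\dx}^{-3/4-\eps} u_n \|_{L_x^2 L_T^\infty} \le \| u_n \|_{X_T}$. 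This already accounts for the linear term $\| u_n \|_{X_T}$.

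For the high frequencies I would trade the missing derivatives against the smoothing in $\dx^{-1}$. For $N \ge 2$ the multiplier $\dx^{-1} P_N$ is, by scaling, convolution in $x$ with a kernel of mass $\sim N^{-1}$, so from $P_N u_n = \dx^{-1} P_N (e^{\Lambda_n}\vf_n)$ and Minkowski's inequality I get $\| P_N u_n \|_{L_x^2 L_T^\infty} \lesssim N^{-1}\, \| \wt{P}_N (e^{\Lambda_n}\vf_n) \|_{L_x^2 L_T^\infty}$ for a fattened projection $\wt{P}_N$. Since $\lr{\dx}^{3/4+\eps} \sim N^{3/4+\eps}$ on the support of $\wt{P}_N$, I may insert the smoothing weight to obtain $\| P_N u_n \|_{L_x^2 L_T^\infty} \lesssim N^{-1/4+\eps}\, \| \lr{\dx}^{-3/4-\eps}(e^{\Lambda_n}\vf_n) \|_{L_x^2 L_T^\infty}$, and $\sum_{N \ge 2} N^{-1/4+\eps}$ is a convergent geometric sum for $\eps$ small. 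Thus everything reduces to estimating the single product $\lr{\dx}^{-3/4-\eps}(e^{\Lambda_n}\vf_n)$ in the maximal-function norm.

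That product estimate is the main obstacle, and it is genuinely delicate because neither of the two obvious moves works: discarding the weight would leave $\| \vf_n \|_{L_x^2 L_T^\infty}$, which $X_T$ does not control, while keeping it on $\dx u_n = e^{\Lambda_n}\vf_n$ directly costs a full derivative on $u_n$. My plan is to commute the smoothing past the bounded multiplier, $\lr{\dx}^{-3/4-\eps}(e^{\Lambda_n}\vf_n) = e^{\Lambda_n}\lr{\dx}^{-3/4-\eps}\vf_n + [\lr{\dx}^{-3/4-\eps}, e^{\Lambda_n}]\vf_n$, and to handle the commutator by a fractional Leibniz estimate as in \cite{KPV93} combined with $\dx e^{\Lambda_n} = c_1 u_n e^{\Lambda_n}$; this converts the commutator into products of $u_n$ and $\vf_n$ measured in the $\lr{\dx}^{-3/4-\eps}$-smoothing norms carried by $X_T$, which after Cauchy--Schwarz produce the nonlinear contributions $\| u_n \|_{X_T}^2 + \| \vf_n \|_{X_T}^2$. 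The weight $e^{\frac32 \| \Lambda_n \|_{L_{T,x}^\infty}}$ is then collected from the factors $e^{\Lambda_n}$ (and $e^{2\Lambda_n}$) appearing in these products.

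Equivalently — and this is the bookkeeping I would try first to make the quadratic term $\| u_n \|_{X_T}^2$ transparent — I would run the same low/high splitting on $w_n := e^{\Lambda_n} u_n$, using $\| u_n \|_{L_x^2 L_T^\infty} \le e^{\| \Lambda_n \|_{L_{T,x}^\infty}} \| w_n \|_{L_x^2 L_T^\infty}$ and the clean identity $\dx w_n = c_1 e^{\Lambda_n} u_n^2 + e^{2\Lambda_n}\vf_n$. Now the high-frequency reconstruction $P_N w_n = \dx^{-1} P_N \dx w_n$ feeds directly on the quadratic source $c_1 e^{\Lambda_n} u_n^2$, which yields $\| u_n \|_{X_T}^2$ after the same smoothing estimate, and on $e^{2\Lambda_n}\vf_n$, which yields the $\vf_n$-contribution, so the nonlinear structure is exposed with no commutator needed. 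In either route the only hard point is the same, namely bounding a product against the $L_x^2 L_T^\infty$ maximal norm using only the $\lr{\dx}^{-3/4-\eps}$-smoothing available from the $X_T$-norm without losing derivatives; the remaining steps are Hölder in mixed space--time norms and summation of the geometric series in $N$.
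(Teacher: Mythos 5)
Your first route is, in substance, the paper's own proof written in commutator language. The authors carry out exactly your ``commute the weight past $e^{\Lambda_n}$'' step, but they do it by an explicit bilinear Littlewood--Paley decomposition of $e^{\Lambda_n}\vf_n$ into pieces $P_{N_1}e^{\Lambda_n}\,P_{N_2}\vf_n$: the regime $N_1\ll N_2$ is your leading term (the weight lands on $\vf_n$ and costs $\|e^{\Lambda_n}\|_{L_{T,x}^{\infty}}\|\vf_n\|_{X_T}$), and the regime $N_1\gtrsim N_2$ is your commutator, where the gain comes from $\|P_{N_1}e^{\Lambda_n}\|_{L_{T,x}^{\infty}}\lesssim N_1^{-1}\|\dx e^{\Lambda_n}\|_{L_{T,x}^{\infty}}=N_1^{-1}|c_1|\,\|u_ne^{\Lambda_n}\|_{L_{T,x}^{\infty}}$ against the $N_2^{3/4+\eps}$ paid to undo the weight on $\vf_n$, followed by Gagliardo--Nirenberg $\|u_n\|_{L_{T,x}^{\infty}}\lesssim\|e^{\Lambda_n}\|_{L_{T,x}^{\infty}}^{1/2}\|u_n\|_{L_T^{\infty}L_x^2}^{1/2}\|\vf_n\|_{L_T^{\infty}L_x^2}^{1/2}$; this last step is what produces the exponent $\tfrac32$ on the exponential and the product $\|u_n\|_{X_T}^{1/2}\|\vf_n\|_{X_T}^{3/2}$, hence by Young the stated $\|u_n\|_{X_T}^2+\|\vf_n\|_{X_T}^2$. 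The one thin spot in your sketch is the appeal to ``a fractional Leibniz estimate as in KPV93'' for the commutator: those estimates are fixed-time $L_x^p$ statements, whereas here the commutator must be bounded in $L_x^2L_T^{\infty}$ with the supremum in $t$ inside the $x$-integration; making that rigorous is precisely the dyadic decomposition with $L_x^1$-kernel bounds and Minkowski's inequality that the paper performs, so your commutator would in practice be rewritten as their paraproduct.

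Your second, ``equivalent'' route via $w_n:=e^{\Lambda_n}u_n$ is not equivalent and should be dropped. It does not remove the product-estimate difficulty, since the term $e^{2\Lambda_n}\vf_n$ in $\dx w_n$ has exactly the same structure as $e^{\Lambda_n}\vf_n$ and requires the same commutator/paraproduct treatment. Worse, the quadratic source $c_1e^{\Lambda_n}u_n^2$ must itself be placed in $L_x^2L_T^{\infty}$, and every H\"older splitting of $u_n^2$ in that norm, e.g. $\|u_n^2\|_{L_x^2L_T^{\infty}}\le\|u_n\|_{L_{T,x}^{\infty}}\|u_n\|_{L_x^2L_T^{\infty}}$, reintroduces the quantity $\|u_n\|_{L_x^2L_T^{\infty}}$ being estimated with a prefactor $e^{C\|\Lambda_n\|_{L_{T,x}^{\infty}}}\|u_n\|_{L_{T,x}^{\infty}}$ that cannot be made small: the argument is circular. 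The correct way to see the $\|u_n\|_{X_T}^2$ term is through the Gagliardo--Nirenberg step above, not through an explicit $u_n^2$ source.
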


\begin{proof}
The low frequency part is easily handed:
\[
\| P_1 u_n \|_{L_x^2 L_T^{\infty}}
\lesssim \| \lr{\dx}^{-\frac{3}{4}-\eps} P_1 u_n \|_{L_x^2 L_T^{\infty}}
\lesssim \| u_n \|_{X_T}. 
\]
We use the Littlewood-Paley decomposition to estimate the high frequency part:
\begin{equation} \label{unLP}
\begin{aligned}
\| P_{>1} u_n \|_{L_x^2 L_T^{\infty}}
&\lesssim \| P_{>1} \lr{\dx}^{-1} (e^{\Lambda_n} \vf_n) \|_{L_x^2 L_T^{\infty}} \\
&\lesssim \sum_{N_1,N_2 \in 2^{\N_0}} \| \lr{\dx}^{-1} (P_{N_1} e^{\Lambda_n} P_{N_2} \vf_n) \|_{L_x^2 L_T^{\infty}}.
\end{aligned}
\end{equation}
For $N_1 \gtrsim N_2$, we have
\begin{align*}
\| \lr{\dx}^{-1} (P_{N_1} e^{\Lambda_n} P_{N_2} \vf_n) \|_{L_x^2 L_T^{\infty}}
&\lesssim \| P_{N_1} e^{\Lambda_n} P_{N_2} \vf_n \|_{L_x^2 L_T^{\infty}} \\
&\lesssim N_1^{-\frac{1}{4}+\eps} \| \dx P_{N_1} e^{\Lambda_n} \|_{L_{T,x}^{\infty}} \| \lr{\dx}^{-\frac{3}{4}-\eps} P_{N_2} v_n \|_{L_x^2 L_T^{\infty}} \\
&\lesssim N_1^{-\frac{1}{4}+\eps} \| e^{\Lambda_n} \|_{L_{T,x}^{\infty}} \| u_n \|_{L_{T,x}^{\infty}} \| v_n \|_{X_T} \\
&\lesssim N_1^{-\frac{1}{4}+\eps} e^{\frac{3}{2} \| \Lambda_n \|_{L_{T,x}^{\infty}}} \| u_n \|_{X_T}^{\frac{1}{2}} \| v_n \|_{X_T}^{\frac{3}{2}}.
\end{align*}
Here, we have used the Gagliardo-Nirenberg type inequality in the last inequality as follows:
\begin{equation} \label{unGN}
\| u_n \|_{L_{T,x}^{\infty}}
\lesssim \| u_n \|_{L_T^{\infty}L_x^2}^{\frac{1}{2}} \| \dx u_n \|_{L_T^{\infty}L_x^2}^{\frac{1}{2}}
\lesssim \| e^{\Lambda_n} \|_{L_{T,x}^{\infty}}^{\frac{1}{2}} \| u_n \|_{L_T^{\infty} L_x^2}^{\frac{1}{2}} \| \vf_n \|_{L_T^{\infty} L_x^2}^{\frac{1}{2}}.
\end{equation}
When $N_1 \ll N_2$, because the frequency of the product of the two functions is around $N_2$, we have
\begin{align*}
\| \lr{\dx}^{-1} (P_{N_1} e^{\Lambda_n} P_{N_2} \vf_n) \|_{L_x^2 L_T^{\infty}}
&\lesssim N_2^{-\frac{1}{4}+\eps} \| e^{\Lambda_n} \|_{L_{T,x}^{\infty}} \| \lr{\dx}^{-\frac{3}{4}-\eps} \vf_n \|_{L_x^2 L_T^{\infty}} \\
&\lesssim N_2^{-\frac{1}{4}+\eps} \| e^{\Lambda_n} \|_{L_{T,x}^{\infty}} \| \vf_n \|_{X_T}.
\end{align*}
Hence, by using $(N_1+N_2)^{-\frac{1}{4}+\eps}$, we can sum up the summation with respect to $N_1$ and $N_2$ in \eqref{unLP}.
Therefore, we obtain the desired bound.
\end{proof}

Lemma \ref{lem:unbound} and \eqref{unGN} yield that
\begin{align}
& \label{estun1}
\begin{aligned}
&\| u_n \dx \vf_n \|_{L_{T,x}^2} + \| u_n^2 \vf_n \|_{L_{T,x}^2} \\
&\lesssim \| u_n \|_{L_x^2 L_{T}^{\infty}} \| \dx \vf_n \|_{L_x^{\infty} L_{T}^2} + \| u_n \|_{L_{T,x}^6}^2 \| \vf_n \|_{L_{T,x}^6} \\
&\lesssim e^{\frac{3}{2} \| \Lambda_n \|_{L_{T,x}^{\infty}}} \left( \| u_n \|_{X_T} + \| \lr{\dx} u_n \|_{X_T}^2+ \| \vf_n \|_{X_T}^2 \right) \| \vf_n \|_{X_{T}},
\end{aligned}
\\
& \label{estun2}
\begin{aligned}
&\| u_n^2 \dx \vf_n \|_{L_{T,x}^2} + \| u_n^3 \vf_n \|_{L_{T,x}^2} + \left\| \vf_n \int_{-\infty}^x e^{2 \Lambda_n} \vf_n^2 dy \right\|_{L_{T,x}^2} \\
&\lesssim \| u_n \|_{L_x^2 L_{T}^{\infty}} \| u_n \|_{L_{T,x}^{\infty}} \| \dx \vf_n \|_{L_x^{\infty} L_{T}^2} + \| u_n \|_{L_{T,x}^8}^3 \| \vf_n \|_{L_{T,x}^8} \\
&\qquad + \| e^{2\Lambda_n} \|_{L_{T,x}^{\infty}} \| \vf_n \|_{L_{T}^{\infty} L_x^2}^3\\
&\lesssim e^{2 \| \Lambda_n \|_{L_{T,x}^{\infty}}} \left( \| u_n \|_{X_T}^2 + \| \vf_n \|_{X_T}^2 + \| u_n \|_{X_T}^3 \right) \| \vf_n \|_{X_T}.
\end{aligned}
\end{align}
Since \eqref{unGN} yields that
\begin{equation} \label{uvest}
\begin{aligned}
\| u_n \vf_n \|_{L_x^{\infty} L_{T}^1}
&\lesssim \| u_n P_1 \vf_n \|_{L_x^{\infty} L_{T}^1} + \| u_n  P_{>1} \vf_n \|_{L_x^{\infty} L_{T}^1} \\
&\lesssim T^{\frac{1}{2}} \| u_n \|_{L_{T,x}^{\infty}} \left( \| P_1 \vf_n \|_{L_{T,x}^{\infty}} + \| P_{>1} \vf_n \|_{L_x^{\infty} L_T^2} \right) \\
&\lesssim T^{\frac{1}{2}} \| e^{\Lambda_n} \|_{L_{T,x}^{\infty}}^{\frac{1}{2}} \| u_n \|_{X_T}^{\frac{1}{2}} \| \vf_n \|_{X_T}^{\frac{3}{2}},
\end{aligned}
\end{equation}
by \eqref{eq:gauge0}, we have
\begin{equation}
\label{estun3}
\begin{aligned}
&\left\| \int_{-\infty}^x u_n (t,y) dy \right\|_{L_{T_n,x}^{\infty}} - \left\| \int_{-\infty}^x u_{0,n} (y) dy \right\|_{L_x^{\infty}} \\
&\lesssim \| \dx^2 u_n \|_{L_x^{\infty} L_{T_n}^1} + \| u_n \dx u_n \|_{L_x^{\infty} L_{T_n}^1} + \| (\dx u_n)^2 \|_{L_{T_n,x}^1} \\
&\lesssim \| e^{\Lambda_n} \|_{L_{T_n,x}^{\infty}} \Big( \| \dx \vf_n \|_{L_x^{\infty} L_{T_n}^1} + \| u_n \vf_n \|_{L_x^{\infty} L_{T_n}^1} + \| e^{\Lambda_n}  \|_{L_{T_n,x}^{\infty}} \| \vf_n^2 \|_{L_{T_n,x}^1} \Big) \\
&\lesssim T_n^{\frac{1}{2}} e^{2 \| \Lambda_n \|_{L_{T_n,x}^{\infty}}} \Big( 1+ \| u_n \|_{X_{T_n}} + \| \vf_n \|_{X_{T_n}} \Big) \| \vf_n \|_{X_{T_n}}.
\end{aligned}
\end{equation}

We set
\[
\| u \|_{Z_T} := \| u \|_{X_T} + \left\| e^{-c_1 \int_{-\infty}^x u(t,y) dy} \dx u \right\|_{X_T} + \left\| \int_{-\infty}^x u(t,y) dy \right\|_{L_{T,x}^{\infty}}.
\]
Because $u_n$ and $\vf_n$ satisfy \eqref{eq:u}, \eqref{eq:vf} with $c_2=0$, the estimates \eqref{est:lin}, \eqref{estun1}, \eqref{estun2}, and \eqref{estun3} yield that
\begin{equation} \label{apriori00}
\| u_n \|_{Z_{T_n}} \le C_1 \| u_{0,n} \|_{\mathcal{X}^1} + C_2 T_n^{\frac{1}{2}} e^{\frac{5}{2} |c_1| \| u_n \|_{Z_{T_n}}} \| u_n \|_{Z_{T_n}} \left( 1 + \| u_n \|_{Z_{T_n}}^3 \right).
\end{equation}

For simplicity, we set
\[
\| u \|_{\wt{Y}_T}
= \| u \|_{Z_T} + \left\| \lr{\dx} \left( e^{-c_1 \int_{-\infty}^x u(t,y) dy} \dx u \right) \right\|_{X_T}.
\]
Since Lemma \ref{lem:unbound} and \eqref{unGN} lead to
\begin{equation}
\label{estun4}
\begin{aligned}
&\left\| \dx \left\{ e^{-\Lambda_n} \left( c_1^2 u_n^2 \dx^2 u_n + c_1^2 \dx u_n \int_{-\infty}^x (\dy u_n)^2 dy - \frac{c_1^3}{3} u_n^3 \dx u_n \right) \right\} \right\|_{L_{T,x}^2} \\
&\lesssim \| u_n^2 \dx^2 \vf_n \|_{L_{T,x}^2} + \| u_n^3 \dx \vf_n \|_{L_{T,x}^2} + \left\| \dx \vf_n \int_{-\infty}^x e^{2 \Lambda_n} \vf_n^2 dy \right\|_{L_{T,x}^2} \\
&\qquad + \| e^{\Lambda_n} \|_{L_{T,x}^{\infty}} \left( \| u_n \vf_n \dx \vf_n \|_{L_{T,x}^2} + \| u_n^2 \vf_n^2 \|_{L_{T,x}^2} + \| e^{\Lambda_n} \|_{L_{T,x}^{\infty}} \| \vf_n^3 \|_{L_{T,x}^2} \right) \\
&\lesssim \| u_n \|_{L_x^2 L_{T}^{\infty}} \| u_n \|_{L_{T,x}^{\infty}} \| \dx^2 \vf_n \|_{L_x^{\infty} L_{T}^2} + \| u_n \|_{L_{T,x}^8}^3 \| \dx \vf_n \|_{L_{T,x}^8} \\
&\qquad + \| e^{2\Lambda_n} \|_{L_{T,x}^{\infty}} \| \dx \vf_n \|_{L_{T}^{\infty} L_x^2} \| \vf_n \|_{L_{T}^{\infty} L_x^2}^2\\
&\quad + e^{2 \| \Lambda_n \|_{L_{T,x}^{\infty}}} \left( \| u_n \|_{L_{T,x}^6} \| \vf_n \|_{L_{T,x}^6} \| \dx \vf_n \|_{L_{T,x}^6} + \| u_n \|_{L_{T,x}^8}^2 \| \vf_n \|_{L_{T,x}^8}^2 + \| \vf_n \|_{L_{T,x}^6}^3 \right) \\
&\lesssim e^{2 \| \Lambda_n \|_{L_{T,x}^{\infty}}} \| \lr{\dx} \vf_n \|_{X_T} \left( \| u_n \|_{Z_T}^2 + \| u_n \|_{Z_T}^3 \right),
\end{aligned}
\end{equation}
we have
\[
\| u_n \|_{\wt{Y}_{T_n}}
\le C_1 \| u_{0,n} \|_{\mathcal{X}^2} + C_2 T_n^{\frac{1}{2}} e^{\frac{5}{2} |c_1| \| u_n \|_{Z_{T_n}}} \| u_n \|_{\wt{Y}_{T_n}} \left( 1+ \| u_n \|_{Z_{T_n}}^3 \right).
\]

Here, we set
\[
T^{\ast} := \frac{1}{10} \left( C_2 e^{10 |c_1| C_1 \| u_0 \|_{\mathcal{X}^1}} \left\{ 1 + (4 C_1 \| u_0 \|_{\mathcal{X}^1})^3 \right\} \right)^{-2},
\]
which is independent of $n$.
By $\| u_{0,n} \|_{\mathcal{X}^1} \le 2 \| u_0 \|_{\mathcal{X}^1}$, the continuity argument shows
\[
\| u_n \|_{Z_{T_n^{(0)}}} \le 3C_1 \| u_0 \|_{\mathcal{X}^1}, \quad
\| u_n \|_{\wt{Y}_{T_n^{(0)}}} \le 3C_1 \| u_{0,n} \|_{\mathcal{X}^2},
\]
where $T_n^{(0)} := \min (T_n, T^{\ast})$.
Then, Theorem \ref{WP} yields that there exists $\rho_n$ depending on $\| u_0 \|_{\mathcal{X}^1}$ and $\| u_{0,n} \|_{\mathcal{X}^2}$ such that $u_n$ satisfies \eqref{KdV} on $[-(T_n + \rho_n), T_n + \rho_n]$.
Because we can apply the estimates \eqref{estun1}, \eqref{estun2}, \eqref{estun3}, and \eqref{estun4} as long as $u_n$ is a solution to \eqref{KdV}, we obtain
\begin{align*}
\| u_n \|_{Z_{T_n+\rho_n}}
\le & C_1 \| u_{0,n} \|_{\mathcal{X}^1} \\
&\quad + C_2 (T_n+\rho_n)^{\frac{1}{2}} e^{\frac{5}{2} |c_1| \| u_n \|_{Z_{T_n+\rho_n}}} \| u_n \|_{Z_{T_n+\rho_n}} \left( 1 + \| u_n \|_{Z_{T_n+\rho_n}}^3 \right), \\
\| u_n \|_{\wt{Y}_{T_n+\rho_n}}
\le & C_1 \| u_{0,n} \|_{\mathcal{X}^2} \\
&\quad + C_2 (T_n+\rho_n)^{\frac{1}{2}} e^{\frac{5}{2} |c_1| \| u_n \|_{Z_{T_n+\rho_n}}} \| u_n \|_{\wt{Y}_{T_n+\rho_n}} \left( 1+ \| u_n \|_{Z_{T_n+\rho_n}}^3 \right).
\end{align*}
By setting $T_n^{(1)} := \min (T_n+\rho_n, T^{\ast})$, these bounds show that
\[
\| u_n \|_{Z_{T_n^{(1)}}} \le 3C_1 \| u_0 \|_{\mathcal{X}^1}, \quad
\| u_n \|_{\wt{Y}_{T_n^{(1)}}} \le 3C_1 \| u_{0,n} \|_{\mathcal{X}^2}.
\]
By repeating this procedure $k$-times, we can extend this bound to that for $T_n^{(k)} := \min (T_n+k\rho_n, T^{\ast})$ and $k \in \N$.
In particular, because there exists an integer $k_n$ such that $T^{(k_n)}_{n} = T^{\ast}$, we obtain
\begin{equation} \label{boundu_n}
\| u_n \|_{Z_{T^{\ast}}} \le 3C_1 \| u_0 \|_{\mathcal{X}^1}, \quad
\| u_n \|_{\wt{Y}_{T^{\ast}}} \le 3C_1 \| u_{0,n} \|_{\mathcal{X}^2}
\end{equation}
for any $n \in \N$.

Next, we consider the estimate for the difference.
By \eqref{eq:gauge0}, \eqref{uvest}, \eqref{boundu_n}, and taking  $T^{\ast}$ small if necessary, we have
\begin{align*}
&\| \Lambda_n - \Lambda_m \|_{L_{T^{\ast},x}^{\infty}} - \left\| \int_{-\infty}^x (u_{0,n}(y) - u_{0,m}(y)) dy \right\|_{L_x^{\infty}} \\
&\lesssim \| \dx^2 u_n - \dx^2 u_m \|_{L_x^{\infty} L_{T^{\ast}}^1} + \| u_n \dx u_n - u_m \dx u_m\|_{L_x^{\infty} L_{T^{\ast}}^1} \\
&\qquad + \| (\dx u_n)^2 - (\dx u_m)^2 \|_{L_{T^{\ast},x}^1} \\
&\lesssim \| e^{\Lambda_n} - e^{\Lambda_m} \|_{L_{T^{\ast},x}^{\infty}} \| \dx \vf_n \|_{L_x^{\infty} L_{T^{\ast}}^1}
+ \| e^{\Lambda_m} \|_{L_{T^{\ast},x}^{\infty}} \| \dx \vf_n - \dx \vf_m \|_{L_x^{\infty} L_{T^{\ast}}^1} \\
&\qquad + \| e^{\Lambda_n} - e^{\Lambda_m} \|_{L_{T^{\ast},x}^{\infty}} \| u_n \vf_n \|_{L_x^{\infty} L_{T^{\ast}}^1}
+ \| e^{\Lambda_m} \|_{L_{T^{\ast},x}^{\infty}} \| u_n \vf_n - u_m \vf_m \|_{L_x^{\infty} L_{T^{\ast}}^1} \\
&\qquad + \| e^{2\Lambda_n} - e^{2\Lambda_m} \|_{L_{T^{\ast},x}^{\infty}} \| \vf_n^2 \|_{L_x^{\infty} L_{T^{\ast}}^1}
+ \| e^{2\Lambda_m} \|_{L_{T^{\ast},x}^{\infty}} \| \vf_n^2 - \vf_m^2 \|_{L_x^{\infty} L_{T^{\ast}}^1} \\
&\le \frac{1}{2} \left( \| \Lambda_n - \Lambda_m \|_{L_{T^{\ast},x}^{\infty}} + \| u_n - u_m \|_{X_{T^{\ast}}} + \| \vf_n - \vf_m \|_{X_{T^{\ast}}} \right).
\end{align*}
Because the remaining cases are similarly handled, we obtain
\[
\| \Lambda_n - \Lambda_m \|_{L_{T^{\ast},x}^{\infty}} + \| u_n - u_m \|_{X_{T^{\ast}}} + \| \vf_n - \vf_m \|_{X_{T^{\ast}}}
\lesssim \| u_{0,n} - u_{0,m} \|_{\mathcal{X}^1}.
\]
Therefore, $\{ u_n \}$ is a Cauchy sequence and the limit $u$ is in $C([-T^{\ast}, T^{\ast}]; \mathcal{X}^1)$.
Hence, we conclude that \eqref{KdV} is well-posed in $\mathcal{X}^1$ if $c_2=0$.

\subsection{Persistence of regularity} \label{S:WPX12}

Let $c_2=0$, $s \ge 1$, and $u_0 \in \mathcal{X}^s$.
The well-posedness in \S \ref{S:WPX11} says that there exist the time $T>0$ and the solution $u \in C([-T,T];\mathcal{X}^1)$.
We prove that the solution has regularity, i.e., $u \in C([-T,T]; \mathcal{X}^s)$, where $T$ depends only on $\| u_0 \|_{\mathcal{X}^1}$.
For simplicity, we set $r:=s-1 \ge 0$, $\Lambda (t,x) := c_1 \int_{-\infty}^x u(t,y) dy$, and $\vf := e^{-\Lambda} \dx u$.
Moreover, we define
\[
\| f \|_{X^r_T} :=
\left\| \lr{\dx}^r f \right\|_{X_T} + \left\| |\dx|^{r+\frac{1}{8}} f \right\|_{L_T^8 L_x^4} + \left\| \dx^{k+1} f \right\|_{L_x^{\frac{4}{r-k}} L_T^{\frac{4}{2-(r-k)}}},
\]
where $k$ is the integer satisfying $k< r \le k+1$.
Note that the third term on the right hand side is meaningless if $r \in \N_0$.
Indeed, for $k \in \N_0$ and $0<T<1$, we have $\| \dx^{k+1} f \|_{L_{T,x}^4} \lesssim \| \lr{\dx}^{k+1} f \|_{X_T}$.

We apply Lemmas \ref{lem:Str} and \ref{lem:Kato} and Stein's interpolation theorem \cite{Ste56} as in \cite{KPV93} to obtain
\[
\| |\dx|^{\theta} \lp (t) u_0 \|_{L_x^{\frac{4}{1-\theta}} L_T^{\frac{4}{1+\theta}}} \lesssim \| u_0 \|_{L^2}
\]
for $0<T<1$ and $0<\theta<1$.
Hence, by \eqref{est:lin}, we have
\begin{equation} \label{est:linr}
\| \lp (t) u_0 \|_{X^r_T} \le C_1 \| u_0 \|_{H^r}
\end{equation}
for $0<T<1$.
We also use the following norms:
\begin{align*}
&\| u \|_{\wt{X}_T^r} := \| u \|_{X_T^r} + \left\| e^{-c_1 \int_{-\infty}^x u(t,y) dy} \dx u \right\|_{X_T^r}, \\
&\| u \|_{Z_T^s} := \| u \|_{\wt{X}_T^{s-1}} + \left\| \int_{-\infty}^x u(t,y) dy \right\|_{L_{T,x}^{\infty}}.
\end{align*}

We observe a product estimate in the Sobolev space, while similar estimates are known (see, for example, Theorem 4 of \S 4.6.2 in \cite{RunSic96}, Theorem A.1 in \cite{MacOka15}, and Lemma 2.2 in \cite{MacOka17}).

\begin{lem} \label{lem:hombi}
For $r \ge 0$, we have
\[
\| f g \|_{H^r}
\lesssim
\| f \|_{H^r} \| g \|_{L^{\infty}} + \| f \|_{H^{r-[r]}} \| g \|_{\dot{H}^{[r]+1}},
\]
where $[r]$ means the largest integer less than or equal to $r$.
\end{lem}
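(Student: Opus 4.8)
The plan is to prove this by a Littlewood--Paley paraproduct (Bony) decomposition, distributing the derivatives so that whenever $g$ is the high-frequency factor it absorbs the \emph{integer} number $[r]+1$ of derivatives, leaving $f$ in $H^{r-[r]}$. Write $m:=[r]$ and $\sigma:=r-m\in[0,1)$, so that $r=m+\sigma$ and, crucially, $[r]+1=m+1>r$. Since
\[
\|fg\|_{H^r}^2 \approx \|P_1(fg)\|_{L^2}^2 + \sum_{N\ge 2} N^{2r}\|P_N(fg)\|_{L^2}^2
\]
and the low-frequency block obeys $\|P_1(fg)\|_{L^2}\le\|fg\|_{L^2}\le\|f\|_{L^2}\|g\|_{L^\infty}\le\|f\|_{H^r}\|g\|_{L^\infty}$ (using $r\ge 0$), it suffices to bound the high-frequency pieces. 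This preliminary reduction also removes any interference between the homogeneous norm $\dot H^{m+1}$ and the lowest dyadic block, where the inhomogeneous pieces $P_N$ and the homogeneous weight would otherwise clash.

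Next I would split $fg=T_gf+T_fg+R(f,g)$, where $T_gf=\sum_N P_Nf\,P_{\le N-2}g$ (high frequency on $f$), $T_fg=\sum_N P_{\le N-2}f\,P_Ng$ (high frequency on $g$), and $R(f,g)=\sum_{|N_1-N_2|\le 1}P_{N_1}f\,P_{N_2}g$ (comparable frequencies). For $T_gf$ the output frequency is $\sim N$, so the uniform $L^\infty$-boundedness of the partial-sum projections $P_{\le N}$ gives
\[
\sum_N N^{2r}\|P_Nf\,P_{\le N-2}g\|_{L^2}^2 \lesssim \|g\|_{L^\infty}^2\sum_N N^{2r}\|P_Nf\|_{L^2}^2 \lesssim \|f\|_{H^r}^2\|g\|_{L^\infty}^2,
\]
which produces the first term on the right-hand side.

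The heart of the argument is the low--high term $T_fg$, whose output frequency is again $\sim N$ but where the derivatives must be placed on $g$. Writing $N^r=N^{\sigma-1}\cdot N^{m+1}$ I would estimate
\[
N^r\|P_{\le N-2}f\,P_Ng\|_{L^2}\le \bigl(N^{\sigma-1}\|P_{\le N-2}f\|_{L^\infty}\bigr)\,N^{m+1}\|P_Ng\|_{L^2}.
\]
The decisive point is the uniform bound $\sup_{N}N^{\sigma-1}\|P_{\le N}f\|_{L^\infty}\lesssim\|f\|_{H^\sigma}$: by Bernstein $\|P_{N'}f\|_{L^\infty}\lesssim N'^{1/2}\|P_{N'}f\|_{L^2}$ and Cauchy--Schwarz in the dyadic frequency, one gets $\|P_{\le N}f\|_{L^\infty}\lesssim \lr{N}^{1/2-\sigma}\|f\|_{H^\sigma}$ (with only a harmless logarithm at $\sigma=\tfrac12$), and since $\sigma-1<0$ the prefactor $N^{\sigma-1}$ kills this growth for every $\sigma\in[0,1)$. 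Summing in $N$ by $\ell^2$-orthogonality then yields $\|T_fg\|_{H^r}\lesssim\|f\|_{H^\sigma}\|g\|_{\dot H^{m+1}}$. This is exactly where the choice $[r]+1$, an integer strictly larger than $r$, is exploited: it manufactures the negative power $N^{\sigma-1}$ that absorbs the Bernstein loss incurred by placing $f$ in $L^\infty$.

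For the remainder $R(f,g)$, whose output frequency only satisfies $\lesssim N_1$, I would estimate crudely by the triangle inequality, $\|R(f,g)\|_{H^r}\lesssim\sum_{N_1}N_1^r\|P_{N_1}f\,\wt{P}_{N_1}g\|_{L^2}$ with a fattened projection $\wt{P}_{N_1}$, then apply Bernstein to $g$ and Cauchy--Schwarz in $N_1$. Distributing $N_1^{r+1/2}=N_1^{\sigma}\cdot N_1^{m+1}\cdot N_1^{-1/2}$, the surplus factor $N_1^{-1/2}$ renders the dyadic sum convergent and produces $\|R(f,g)\|_{H^r}\lesssim\|f\|_{H^\sigma}\|g\|_{\dot H^{m+1}}$. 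Collecting the three contributions gives the claim. The main obstacle is the low--high term: one must resist placing only $r$ derivatives on $g$ (which would force $f\in H^s$ with $s>\tfrac12$ to control $\|P_{\le N}f\|_{L^\infty}$) and instead use the integer regularity $[r]+1$ on $g$ to gain the decay $N^{\sigma-1}$, while the initial reduction to high output frequencies is what legitimizes passing from the inhomogeneous Littlewood--Paley pieces to the homogeneous norm $\dot H^{[r]+1}$.
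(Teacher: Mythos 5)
Your proposal is correct, and it is in essence the same paraproduct argument as the paper's: split $fg$ according to which factor carries the high frequency, put $g$ in $L^\infty$ when $f$ is high, and when $g$ is high (or the frequencies are comparable) pay a Bernstein factor $N_1^{1/2}$ on $f$ and absorb it using the fact that $[r]+1>r$, exactly as in your exponent bookkeeping $N^r=N^{\sigma-1}N^{[r]+1}$. The one genuine difference is the treatment of the high--low piece: the paper writes $\lr{\dx}^r\sum_{N_1\gg N_2}P_{N_1}fP_{N_2}g$ as a bilinear multiplier with symbol $\frac{\lr{\xi+\eta}^r}{\lr{\xi}^r}\phi(\eta/\xi)$ and invokes the Coifman--Meyer theorem to get the $H^r\times L^\infty\to H^r$ bound, whereas you use the uniform $L^\infty$-boundedness of $P_{\le N}$ together with the almost-orthogonality of the outputs $P_Nf\,P_{\le N-2}g$; your route is more elementary and avoids an external multiplier theorem, at the cost of splitting off the resonant piece separately (the paper lumps $N_1\sim N_2$ into the term estimated by brute force, which is what you do for $R(f,g)$ anyway). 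Both treatments of the low-frequency output block and of the clash between $P_1$ and the homogeneous norm $\dot H^{[r]+1}$ coincide. One small imprecision: your intermediate claim $\|P_{\le N}f\|_{L^\infty}\lesssim\lr{N}^{1/2-\sigma}\|f\|_{H^\sigma}$ fails for $\sigma>\tfrac12$ (the right-hand side would tend to $0$); the correct statement is $\|P_{\le N}f\|_{L^\infty}\lesssim\max(N^{1/2-\sigma},1)\,\|f\|_{H^\sigma}$, which still yields the uniform bound $\sup_N N^{\sigma-1}\|P_{\le N}f\|_{L^\infty}\lesssim\|f\|_{H^\sigma}$ that your argument actually uses, since $\max(N^{-1/2},N^{\sigma-1})\le 1$ for $N\ge 1$. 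With that cosmetic fix the proof is complete.
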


\begin{proof}
We use the paraproduct decomposition:
\begin{equation} \label{para}
f g = \sum_{\substack{N_1,N_2 \in 2^{\N_0} \\ N_1 \gg N_2}} P_{N_1} f P_{N_2} g + \sum_{\substack{N_1,N_2 \in 2^{\N_0} \\ N_1 \lesssim N_2}} P_{N_1} f P_{N_2} g
=: \text{I} + \text{II}.
\end{equation}
We note that the first term on the right hand side is written as follows:
\[
\lr{\dx}^r \sum_{\substack{N_1,N_2 \in 2^{\N_0} \\ N_1 \gg N_2}} P_{N_1} f P_{N_2} g
= \frac{1}{\sqrt{2\pi}} \iint_{\R^2} e^{ix (\xi+\eta)} \sigma (\xi,\eta) \wh{\lr{\dx}^r f}(\xi) \wh{g}(\eta) d\xi d\eta,
\]
where $\sigma (\xi, \eta) := \frac{\lr{\xi+\eta}^r}{\lr{\xi}^r} \phi \left( \frac{\eta}{\xi} \right)$ and $\phi$ is a smooth function with $\supp \phi \subset [-\frac{1}{2},\frac{1}{2}]$.
A direct calculation shows that
\[
| \partial_{\xi}^{\alpha} \partial_{\eta}^{\beta} \sigma (\xi,\eta)| \lesssim_{\alpha,\beta} (|\xi|+|\eta|)^{-\alpha-\beta}
\]
for $(\xi,\eta) \in \R^2 \setminus \{ (0,0) \}$ and $\alpha, \beta \in \N_0$.
Accordingly, we can apply Coifman-Meyer's Fourier multiplier theorem (see \cite{CoiMey78}) to obtain
\[
\| I \|_{H^r}
\lesssim \| f \|_{H^r} \| g \|_{L^{\infty}}.
\]

The second term on the right hand side of \eqref{para} is calculated as follows:
\begin{align*}
\| \text{II} \|_{H^r}
&\lesssim \sum_{\substack{N_1,N_2 \in 2^{\N_0} \\ N_1 \lesssim N_2}} N_2^r \| P_{N_1} f P_{N_2} g \|_{L^2} \\
&\lesssim
\sum_{\substack{N_1 \in 2^{\N_0} \\ N_1 \sim 1}} \left\| P_{N_1} f \right\|_{L^2} \left\| P_{1} g \right\|_{L^{\infty}}\\
&\quad + \sum_{N_1 \in 2^{\N_0}} \sum_{\substack{N_2 \in 2^{\N} \\ N_1 \lesssim N_2}} N_1^{-r+[r]+\frac{1}{2}} N_2^{r-[r]-1} \left\| P_{N_1} \lr{\dx}^{r-[r]} f \right\|_{L^2} \left\| P_{N_2} |\dx|^{[r]+1} g \right\|_{L^2} \\
&\lesssim \| f \|_{L^2} \| g \|_{L^{\infty}} + \| f \|_{H^{r-[r]}} \| g \|_{\dot{H}^{[r]+1}}.
\end{align*}
\end{proof}

Thanks to
\[
\| e^{\Lambda} \|_{\dot{H}^{k}}
\lesssim \| e^{\Lambda} \|_{L^{\infty}} \| u \|_{H^{k-1}}  \left( 1+ \| u \|_{L^2 \cap H^{k-2}}^{k-1} \right)
\]
for $k \in \N$, Lemma \ref{lem:hombi} leads to
\begin{equation} \label{product2}
\| e^{\Lambda} f \|_{H^r}
\lesssim \| e^{\Lambda} \|_{L^{\infty}} \left( \| f \|_{H^r} + \| u \|_{H^{[r]}} \left( 1+ \| u \|_{L^2 \cap H^{[r]-1}}^{[r]} \right) \| f \|_{H^{r-[r]}} \right).
\end{equation}

We show a generalized version of Lemma \ref{lem:unbound}.

\begin{lem} \label{lem:unbound2}
For $r \ge 0$, we have
\[
\left\| \lr{\dx}^r u \right\|_{L_x^2 L_T^{\infty}}
\lesssim e^{2 \| \Lambda \|_{L_{T,x}^{\infty}}} \left( \| u \|_{\wt{X}_T^{\max (r-\frac{1}{4}+2\eps,0)}} + \| u \|_{\wt{X}_T^{\max (r-\frac{1}{4}+2\eps,0)}}^{[r]+3} \right).
\]
\end{lem}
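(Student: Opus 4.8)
The plan is to mimic the proof of Lemma~\ref{lem:unbound} line by line, splitting $u = P_1 u + P_{>1}u$ and exploiting the identity $\dx u = e^{\Lambda}\vf$ on the high-frequency part. The low-frequency contribution is harmless, since $\langle\dx\rangle^r \sim 1$ on the support of $P_1$, so that $\|\langle\dx\rangle^r P_1 u\|_{L_x^2 L_T^\infty} \lesssim \|\langle\dx\rangle^{-3/4-\eps}P_1 u\|_{L_x^2 L_T^\infty} \lesssim \|u\|_{X_T}$, which is dominated by the right-hand side. For the high part I would write $\langle\dx\rangle^r P_{>1}u = m(\dx)\,\langle\dx\rangle^{r-1}P_{>1}(e^{\Lambda}\vf)$, where $m(\xi) = \langle\xi\rangle/(i\xi)$ is a smooth symbol bounded on $\{|\xi|>1\}$ and hence an $L_x^2 L_T^\infty$-bounded multiplier; this reduces the task to estimating $\|\langle\dx\rangle^{r-1}P_{>1}(e^{\Lambda}\vf)\|_{L_x^2 L_T^\infty}$, into which I insert the Littlewood--Paley decomposition $e^{\Lambda}\vf = \sum_{N_1,N_2} P_{N_1}e^{\Lambda}\,P_{N_2}\vf$ and split according to $N_1 \ll N_2$ (output frequency $\sim N_2$) and $N_1 \gtrsim N_2$ (output frequency $\sim N_1$).

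In the regime $N_1 \ll N_2$ the derivatives fall on $\vf$. Bounding $\|P_{N_1}e^{\Lambda}\|_{L^\infty_{T,x}} \lesssim \|e^{\Lambda}\|_{L^\infty_{T,x}}$ and $\|P_{N_2}\vf\|_{L_x^2 L_T^\infty} \lesssim N_2^{3/4+\eps}\|\langle\dx\rangle^{-3/4-\eps}P_{N_2}\vf\|_{L_x^2 L_T^\infty}$ exactly as in Lemma~\ref{lem:unbound}, the net weight on $\vf$ is $N_2^{r-1/4+\eps}$. Writing $N_2^{r-1/4+\eps} = N_2^{-\eps}\,N_2^{r-1/4+2\eps}$, the surplus $N_2^{-\eps}$ (together with the usual $(N_1+N_2)^{-\eps}$ gain needed to sum in $N_1$) makes the double sum converge, while the remaining factor is absorbed into $\|\langle\dx\rangle^{\max(r-1/4+2\eps,0)}\vf\|_{X_T} \le \|u\|_{\wt{X}_T^{\max(r-1/4+2\eps,0)}}$. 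This is precisely the term that forces the regularity exponent appearing in the statement, and it contributes only a single power of $\vf$ and one factor $e^{\|\Lambda\|_{L^\infty_{T,x}}}$.

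The delicate regime is $N_1 \gtrsim N_2$, where the $r-1$ derivatives land on the gauge factor $e^{\Lambda}$. Here I would keep $\vf$ at regularity zero, via $\|P_{N_2}\vf\|_{L_x^2 L_T^\infty} \lesssim N_2^{3/4+\eps}\|\vf\|_{X_T}$, and place $e^{\Lambda}$ in $L^\infty_{T,x}$. To recover summability I trade $L^\infty$ for $L^2$ by Bernstein's inequality, $\|P_{N_1}e^{\Lambda}\|_{L^\infty_{T,x}} \lesssim N_1^{1/2-\sigma}\|e^{\Lambda}\|_{L_T^\infty \dot H_x^{\sigma}}$, with $\sigma := r+1/4+2\eps$, so that the total weight becomes $N_1^{(r-1)+(1/2-\sigma)+(3/4+\eps)} = N_1^{-\eps}$, which is summable. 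Only the high-frequency part of $e^{\Lambda}$ enters, so the homogeneous norm is harmless, and I would estimate $\|e^{\Lambda}\|_{\dot H^{\sigma}}$ by Lemma~\ref{lem:hombi} and \eqref{product2}: since $\sigma-1 = r-3/4+2\eps \le \max(r-1/4+2\eps,0)$, every $u$-factor produced there lies within the available regularity $\|u\|_{\wt{X}_T^{\max(r-1/4+2\eps,0)}}$.

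A direct count of the $u$-factors generated by \eqref{product2} (at most $[\sigma]+1 \le [r]+2$ of them), together with the single factor $\vf$, yields a polynomial of degree $[r]+3$, and collecting the exponential weights from \eqref{product2} and the Gagliardo--Nirenberg bound \eqref{unGN} gives the prefactor $e^{2\|\Lambda\|_{L^\infty_{T,x}}}$; summing the two regimes completes the estimate. I expect the main obstacle to be exactly this last regime: arranging the Bernstein trade so that the derivative count transferred onto $e^{\Lambda}$ stays within the regularity budget $\sigma-1 \le \max(r-1/4+2\eps,0)$ while still gaining a summable negative power of $N_1$, and then bookkeeping the powers of $u$ coming from \eqref{product2} so that they land on the claimed degree $[r]+3$ rather than something larger.
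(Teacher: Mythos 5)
Your treatment of the low-frequency piece and of the low-high regime $N_1\ll N_2$ coincides with the paper's, and your exponent bookkeeping there is correct. The gap is in the regime $N_1\gtrsim N_2$, specifically in the high-high sub-case $N_1\sim N_2$. There the output frequency of $P_{N_1}e^{\Lambda}P_{N_2}\vf$ is \emph{not} comparable to $N_1$: it ranges over all of $[1,CN_1]$, so the multiplier $\lr{\dx}^{r-1}$ contributes a factor $\lesssim N_1^{\max(r-1,0)}$, not $N_1^{r-1}$. For $0\le r<1$ your total dyadic weight on the diagonal is therefore $N_1^{\max(r-1,0)+(\frac12-\sigma)+(\frac34+\eps)}=N_1^{1-r-\eps}$ with your choice $\sigma=r+\frac14+2\eps$, a \emph{positive} power of $N_1$, and the sum diverges. (For $r\ge1$ your count is fine, and in the strictly unbalanced case $N_1\gg N_2$ it is fine for all $r$.) This diagonal interaction is exactly what the lemma must control --- at $r=0$ it is the content of Lemma \ref{lem:unbound} --- so it cannot be absorbed elsewhere: putting the weight on $\vf$ instead would require more than $\max(r-\frac14+2\eps,0)$ derivatives on $\vf$ in the maximal-function norm.

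The paper avoids this by transferring an \emph{integer} number of derivatives onto the gauge factor measured in $L^{\infty}_{T,x}$ rather than a fractional number in $L^2$: it bounds $\lr{\dx}^{r-1}$ by $\dx^{[r]}$ (a valid bound whatever the output frequency), distributes the $[r]$ derivatives by Leibniz to get the weight $N_1^{[r]}$, and then uses $\|P_{N_1}e^{\Lambda}\|_{L^{\infty}}\sim N_1^{-([r]+1)}\|\dx^{[r]+1}P_{N_1}e^{\Lambda}\|_{L^{\infty}}$ together with $N_1^{[r]}=N_1^{[r]+1}\,N_1^{-\frac14+\eps}\,N_1^{-\frac34-\eps}$ and $N_1^{-\frac34-\eps}\le N_2^{-\frac34-\eps}$; the quantity $\|\dx^{[r]+1}P_{N_1}e^{\Lambda}\|_{L^{\infty}}\lesssim\|\dx^{[r]}(e^{\Lambda}u)\|_{L^{\infty}}$ is then controlled by Sobolev embedding, \eqref{product2}, and \eqref{unGN}, which is where the degree $[r]+3$ and the prefactor $e^{2\|\Lambda\|_{L_{T,x}^{\infty}}}$ actually come from. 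Your Bernstein trade can be repaired on the diagonal by taking $\sigma=\max(r,1)+\frac14+2\eps$ there, so that the weight becomes $N_1^{-\eps}$; the extra regularity $\sigma-1\le\max(r,1)-\frac34+2\eps$ demanded of $ue^{\Lambda}$ is still available because $\|u\|_{\wt{X}_T^{0}}$ already controls $\|u\|_{L_T^{\infty}H^1}$ through its $\vf$-component. But as written, with $\sigma=r+\frac14+2\eps$ throughout, the proof does not close for $0\le r<1$.
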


\begin{proof}
As in \eqref{unLP}, we have
\begin{equation} \label{unLP2}
\begin{aligned}
\left\| P_{>1} \lr{\dx}^r u \right\|_{L_x^2 L_T^{\infty}}
&\lesssim \left\| P_{>1} \lr{\dx}^{r-1} (e^{\Lambda} \vf) \right\|_{L_x^2 L_T^{\infty}} \\
&\lesssim \sum_{N_1,N_2 \in 2^{\N_0}} \left\| P_{>1} \lr{\dx}^{r-1} (P_{N_1} e^{\Lambda} P_{N_2} \vf) \right\|_{L_x^2 L_T^{\infty}}.
\end{aligned}
\end{equation}
For $N_1 \gtrsim N_2$, we have
\begin{align*}
&\left\| P_{>1} \lr{\dx}^{r-1} (P_{N_1} e^{\Lambda} P_{N_2} \vf) \right\|_{L_x^2 L_T^{\infty}} \\
&\lesssim \left\| \dx^{[r]} (P_{N_1} e^{\Lambda} P_{N_2} \vf) \right\|_{L_x^2 L_T^{\infty}} \\
&\lesssim N_1^{-\frac{1}{4}+\eps} \| \dx^{[r]+1} P_{N_1} e^{\Lambda} \|_{L_{T,x}^{\infty}} \left\| \lr{\dx}^{-\frac{3}{4}-\eps} P_{N_2} \vf \right\|_{L_x^2 L_T^{\infty}} \\
&\lesssim N_1^{-\frac{1}{4}+\eps} \| \dx^{[r]} (e^{\Lambda} u) \|_{L_{T,x}^{\infty}} \| \vf \|_{X_T}.
\end{align*}
When $[r] \ge 1$, Sobolev's embedding and \eqref{product2} yield that
\begin{align*}
\| \dx^{[r]} (e^{\Lambda} u) \|_{L_{T,x}^{\infty}}
&\lesssim \| \dx^{[r]-1} (e^{\Lambda} u^2) \|_{L_{T,x}^{\infty}} + \| \dx^{[r]-1} (e^{2\Lambda} \vf ) \|_{L_{T,x}^{\infty}} \\
&\lesssim \| e^{\Lambda} u^2 \|_{H^{[r]-\frac{1}{2}+\eps}} + \| e^{2\Lambda} \vf \|_{H^{[r]-\frac{1}{2}+\eps}} \\
&\lesssim \| e^{\Lambda} \|_{L_{T,x}^{\infty}} \left( \| u \|_{L_T^{\infty} H^{r-\frac{1}{2}+\eps}}^2 + \| u \|_{L_T^{\infty} H^{r-\frac{1}{2}+\eps}}^{[r]+2} \right) \\
&\quad + \| e^{2\Lambda} \|_{L_{T,x}^{\infty}} \| \vf \|_{L_T^{\infty} H^{r-\frac{1}{2}+\eps}} \left( 1 + \| u \|_{L_T^{\infty} H^{[r]-1}}^{[r]} \right) \\
&\lesssim e^{2 \| \Lambda \|_{L_{T,x}^{\infty}}} \left( \| u \|_{\wt{X}_T^{r-\frac{1}{2}+\eps}} + \| u \|_{\wt{X}_T^{r-\frac{1}{2}+\eps}}^{[r]+2} \right).
\end{align*}
When $[r]=0$, \eqref{unGN} yields that
\[
\| \dx^{[r]} (e^{\Lambda} u) \|_{L_{T,x}^{\infty}}
\lesssim \|e^{\Lambda}\|_{L_{T,x}^{\infty}}\|u\|_{L_{T,x}^{\infty}}
\lesssim e^{\frac{3}{2}\|\Lambda \|_{L_{T,x}^{\infty}}}\|u\|_{X_T}^{\frac{1}{2}}\| \vf \|_{X_T}^{\frac{1}{2}}.
\]
Hence, we have
\begin{align*}
&\left\| P_{>1} \lr{\dx}^{r-1} (P_{N_1} e^{\Lambda} P_{N_2} \vf) \right\|_{L_x^2 L_T^{\infty}} \\
&\lesssim N_1^{-\frac{1}{4}+\eps} e^{2 \| u \|_{Z_T^0}} \left( \| u \|_{\wt{X}_T^{\max(r-\frac{1}{2}+\eps,0)}}^2 + \| u \|_{\wt{X}_T^{\max(r-\frac{1}{2}+\eps,0)}}^{[r]+3} \right)
\end{align*}
for $N_1 \gtrsim N_2$.

When $N_1 \ll N_2$, the frequency of the product of the two functions is around $N_2$.
For $0 \le r<\frac{1}{4}-\eps$, we have
\begin{align*}
\left\| \lr{\dx}^{r-1} (P_{N_1} e^{\Lambda} P_{N_2} \vf) \right\|_{L_x^2 L_T^{\infty}}
&\lesssim N_2^{r-\frac{1}{4}+\eps} \| P_{N_1} e^{\Lambda} \|_{L_{T,x}^{\infty}} \left\| \lr{\dx}^{-\frac{3}{4}-\eps} P_{N_2} \vf \right\|_{L_x^2 L_T^{\infty}} \\
&\lesssim N_2^{r-\frac{1}{4}+\eps} \| e^{\Lambda} \|_{L_{T,x}^{\infty}} \| \vf \|_{X_T}.
\end{align*}
For $r \ge \frac{1}{4}-\eps$, we have
\begin{align*}
\left\| \lr{\dx}^{r-1} (P_{N_1} e^{\Lambda} P_{N_2} \vf) \right\|_{L_x^2 L_T^{\infty}}
&\lesssim N_2^{-\eps} \| P_{N_1} e^{\Lambda} \|_{L_{T,x}^{\infty}} \left\| \lr{\dx}^{r-1+\eps} P_{N_2} \vf  \right\|_{L_x^2 L_T^{\infty}} \\
&\lesssim N_2^{-\eps} \| e^{\Lambda} \|_{L_{T,x}^{\infty}} \left\| \lr{\dx}^{r-\frac{1}{4}+2\eps}\vf \right\|_{X_T}.
\end{align*}
Hence, we can sum up the summation with respect to $N_1$ and $N_2$ in \eqref{unLP2}.
Therefore, we obtain the desired bound.
\end{proof}

Let $\wt{r} := r-[r]$.
The fractional Leibniz rule (see Appendix in \cite{KPV93}), Lemma \ref{lem:unbound2}, and an interpolation argument yield that
\begin{align*}
&\left\| |\dx|^{r} (u \dx \vf) \right\|_{L_{T,x}^2} \\
&\lesssim \sum_{k=0}^{[r]} \left\| |\dx|^{\wt{r}} (\dx^{[r]-k} u \dx^{k+1} \vf) \right\|_{L_{T,x}^2} \\
&\lesssim \sum_{k=0}^{[r]} \bigg( \left\| \dx^{[r]-k} u |\dx|^{\wt{r}} \dx^{k+1} \vf \right\|_{L_{T,x}^2} + \left\| |\dx|^{\wt{r}} \dx^{[r]-k} u \right\|_{L_x^{\frac{4}{2-\wt{r}}} L_T^{\frac{4}{\wt{r}}}} \left\| \dx^{k+1} \vf \right\|_{L_x^{\frac{4}{\wt{r}}} L_T^{\frac{4}{2-\wt{r}}}} \bigg) \\
&\lesssim \sum_{k=0}^{[r]} \bigg( \left\| \dx^{[r]-k} u \right\|_{L_x^2 L_T^{\infty}} \left\| |\dx|^{\wt{r}} \dx^{k+1} \vf \right\|_{L_x^{\infty} L_T^2} \\
&\qquad \qquad + \left\| |\dx|^{\wt{r}} \dx^{[r]-k} u \right\|_{L_x^2L_T^{\infty}}^{1-\wt{r}} \left\| |\dx|^{\wt{r}} \dx^{[r]-k} u \right\|_{L_{T,x}^4}^{\wt{r}} \| u \|_{Z^r_T} \bigg) \\
&\lesssim e^{2 \| \Lambda \|_{L_{T,x}^{\infty}}} \left( \| u \|_{\wt{X}_T^{\max(r-\frac{1}{8},0)}} + \| u \|_{\wt{X}_T^{\max(r-\frac{1}{8},0)}}^{[r]+3} \right) \| u \|_{\wt{X}^r_T}.
\end{align*}
Because Sobolev's embedding and \eqref{product2} imply that
\begin{align*}
\| \dx^k u \|_{L_x^{\infty}}
\lesssim \| \dx^{k-1} (e^{\Lambda} \vf ) \|_{L_x^{\infty}}
\lesssim \| e^{\Lambda} \vf \|_{H_x^{k-\frac{1}{2}+\eps}}
\lesssim \| e^{\Lambda} \|_{L_x^{\infty}}  \| \vf \|_{H_x^{k-\frac{1}{2}+\eps}} \left( 1 + \| \vf \|_{H_x^{k-1}}^k \right)
\end{align*}
for $k \in \N$, the same calculation as above leads to
\begin{align*}
&\left\| |\dx|^{r} (u^2 \dx \vf) \right\|_{L_{T,x}^2} \\
&\lesssim \sum_{k=0}^{[r]} \left\| |\dx|^{\wt{r}} (\dx^{[r]-k} (u^2) \dx^{k+1} \vf) \right\|_{L_{T,x}^2} \\
&\lesssim \sum_{k=0}^{[r]} \bigg( \left\| \dx^{[r]-k} (u^2) |\dx|^{\wt{r}} \dx^{k+1} \vf \right\|_{L_{T,x}^2} + \left\| |\dx|^{\wt{r}} \dx^{[r]-k} (u^2) \right\|_{L_x^{\frac{4}{2-\wt{r}}} L_T^{\frac{4}{\wt{r}}}} \left\| \dx^{k+1} \vf \right\|_{L_x^{\frac{4}{\wt{r}}} L_T^{\frac{4}{2-\wt{r}}}} \bigg) \\
&\lesssim e^{3 \| \Lambda \|_{L_{T,x}^{\infty}}} \left( \| u \|_{\wt{X}_T^{\max(r-\frac{1}{8},0)}}^2 + \| u \|_{\wt{X}_T^{\max(r-\frac{1}{8},0)}}^{[r]+4} \right) \| u \|_{\wt{X}^r_T}.
\end{align*}

Lemma \ref{lem:hombi} and \eqref{product2} show that
\begin{align*}
&\left\| \vf \int_{-\infty}^x e^{2 \Lambda} \vf^2 dy \right\|_{L_T^2 H_x^r} \\
&\lesssim \| \vf \|_{L_T^{\infty} H_x^r} \| e^{2 \Lambda} \|_{L_{T,x}^{\infty}} \| \vf \|_{L_T^{\infty} L_x^2}^2 + \| \vf \|_{L_T^{\infty} H_x^{\wt{r}}} \left\| e^{2 \Lambda} \vf^2 \right\|_{L_T^2 H_x^{[r]}} \\
&\lesssim \| e^{2\Lambda} \|_{L_{T,x}^{\infty}} \left( \| u \|_{\wt{X}_T^{\max(r-1,0)}}^2 + \| u \|_{\wt{X}_T^{\max(r-1,0)}}^{[r]+2} \right) \| u \|_{\wt{X}_T^r}.
\end{align*}

Because the remaining terms on the right hand side of \eqref{eq:u} and \eqref{eq:vf} with $c_2=0$ more easily handed, the estimates \eqref{estun3} and \eqref{est:linr} yield that
\begin{equation*}
\| u \|_{Z_T^s} \le C_1 \| u_0 \|_{\mathcal{X}^s} + C_2 T^{\frac{1}{2}} e^{3 |c_1| \| u \|_{Z_T^1}} \left( \| u \|_{Z_T^{\max(s-\frac{7}{8},1)}} + \| u \|_{Z_T^{\max(s-\frac{7}{8},1)}}^{[s]+3} \right) \| u \|_{Z^s_T}.
\end{equation*}
The persistence property follows from this a priori bound with a standard continuity argument.

\subsection{Proof of Theorem \ref{WP} without $c_2 = 0$} \label{S:WPX13}

The first term on the right hand side of \eqref{eq:vf} causes some technical difficulty, because it has a quadratic term with one derivative.
However, by using a gauge transformation, we cancel out this term.
As in the previous subsection, the well-posedness is reduced to show an a priori estimate as \eqref{apriori00}.

Let $\Xi (t,x) = c_2 \int_{-\infty}^x u(t,y)dy$.
Then, \eqref{KdV}, \eqref{gauge}, and \eqref{LLambda} yield
\[
e^{\Xi} \Lc \left( e^{-\Xi} u \right)
= (c_1-c_2) u \dx^2u - (c_1-2c_2)c_2 u^2 \dx u + (c_1-c_2)c_2 u \int_{-\infty}^x (\dy u)^2 dy - \frac{c_2^3}{3} u^4.
\]
Set $\uf := e^{-\Xi} u$.
Since
\begin{align*}
\dx u &= e^{\Xi} ( \dx \uf + c_2 u \uf) = e^{\Xi} \dx \uf + c_2 e^{2\Xi} \uf^2, \\
\dx^2 u &= e^{\Xi} ( \dx^2 \uf + c_2 u \dx \uf) + c_2 e^{2\Xi} ( 2 \uf \dx \uf + 2c_2 u \uf^2) \\
& = e^{\Xi} \dx^2 \uf + 3 c_2 e^{2\Xi} \uf \dx \uf + 2c_2^2 e^{3\Xi} \uf^3,
\end{align*}
we have
\begin{equation} \label{uf1}
\begin{aligned}
\Lc \uf &= (c_1-c_2) \uf (e^{\Xi} \dx^2 \uf + 3 c_2 e^{2\Xi} \uf \dx \uf + 2c_2^2 e^{3\Xi} \uf^3) \\
&\quad - (c_1-2c_2) c_2 e^{\Xi} \uf^2 (e^{\Xi} \dx \uf + c_2 e^{2\Xi} \uf^2) \\
&\quad + (c_1-c_2)c_2 \uf \int_{-\infty}^x (e^{\Xi} \dy \uf + c_2 e^{2\Xi} \uf^2)^2 dy - \frac{c_2^3}{3} e^{3\Xi} \uf^4 \\
&= (c_1-c_2) e^{\Xi} \uf  \dx^2 \uf + (2c_1-c_2)c_2 e^{2\Xi} \uf^2 \dx \uf \\
&\quad + (c_1-c_2)c_2 \uf \int_{-\infty}^x (e^{\Xi} \dy \uf + c_2 e^{2\Xi} \uf^2)^2 dy + \left( c_1- \frac{c_2}{3} \right) c_2^2 e^{3\Xi} \uf^4.
\end{aligned}
\end{equation}
A direct calculation shows that
\begin{equation} \label{duf}
\Lc \dx \uf = (c_1-c_2) e^{\Xi} \uf \dx^3 \uf + (c_1-c_2) e^{\Xi} \dx \uf \dx^2 \uf + \mathcal{N},
\end{equation}
where $\mathcal{N}$ is a linear combination of
\[
e^{2\Xi} \uf^2 \dx^2 \uf, \quad
e^{2\Xi} \uf (\dx \uf)^2, \quad
e^{3\Xi} \uf^3 \dx \uf, \quad
\dx \uf \int_{-\infty}^x (e^{\Xi} \dy \uf + c_2 e^{2\Xi} \uf^2)^2 dy, \quad
e^{4\Xi} \uf^5.
\]
Moreover, let $\Theta := (c_1-c_2) \int_{-\infty}^x (e^{\Xi} \uf )(t,y) dy = (c_1-c_2) \int_{-\infty}^x u (t,y) dy$ and $\vf := e^{-\Theta} \dx \uf$.
Because 
\begin{align*}
\Lc \Theta
&= (c_1-c_2) c_1 u \dx u - (c_1-c_2)^2 \int_{-\infty}^x (\dy u)^2 dy \\
&= (c_1-c_2) c_1 e^{\Xi} \uf ( e^{\Xi} \dx \uf + c_2 e^{2\Xi} \uf^2) - (c_1-c_2)^2 \int_{-\infty}^x (e^{\Xi} \dy \uf + c_2 e^{2\Xi} \uf^2)^2 dy,
\end{align*}
\eqref{gauge} and \eqref{duf} imply that $\Lc \vf$ is equal to a linear combination of
\[
e^{2\Xi} \uf^2 \dx \vf, \quad
e^{2\Xi+\Theta} \uf \vf^2, \quad
e^{3\Xi} \uf^3 \vf, \quad
\vf \int_{-\infty}^x (e^{\Xi+\Theta} \vf + c_2 e^{2\Xi} \uf^2)^2 dy, \quad
e^{4\Xi-\Theta} \uf^5.
\]
In addition,  \eqref{uf1} is written as follows:
\begin{align*}
\Lc \uf
&= (c_1-c_2) e^{\Xi+\Theta} \uf  \dx \vf + c_1^2 e^{2\Xi+\Theta} \uf^2 \vf + (c_1-c_2)c_2 \uf \int_{-\infty}^x (e^{\Xi+\Theta} \vf + c_2 e^{2\Xi} \uf^2)^2 dy \\
&\quad  + \left( c_1 - \frac{c_2}{3} \right) c_2^2 e^{3\Xi} \uf^4.
\end{align*}

Here, we define the norm
\begin{align*}
\| u \|_{\wt{Z}_T} :=& \left\| e^{-c_2\int_{-\infty}^x u(t,y) dy} u \right\|_{X_T} + \left\| e^{-(c_1-c_2)\int_{-\infty}^x u(t,y) dy} \dx \left( e^{-c_2\int_{-\infty}^x u(t,y) dy} u \right) \right\|_{X_T} \\
&\quad + \left\| \int_{-\infty}^x u(t,y) dy \right\|_{L_{T,x}^{\infty}}.
\end{align*}
Then, \eqref{est:lin}, \eqref{estun1}, and \eqref{estun2} yield that
\begin{equation} \label{apriori000}
\| u \|_{\wt{Z}_{T}} \le C_1 \| u_{0} \|_{\mathcal{X}^1} + C_2 T^{\frac{1}{2}} e^{5 (|c_1|+|c_2|) \| u \|_{\wt{Z}_{T}}} \| u \|_{\wt{Z}_{T}} \left( 1 + \| u \|_{\wt{Z}_{T}}^4 \right)
\end{equation}
as long as $u$ is a solution to \eqref{KdV}.
Hence, the same argument as in \S \ref{S:WPX11} shows that the existence time $T$ depends only on $\| u_0 \|_{\mathcal{X}^1}$.
Moreover, \eqref{KdV} is well-posed in $\mathcal{X}^1$.
Because the persistency follows from the same argument as in \S \ref{S:WPX12}, we omit the details here.

\section{Well-posedness for the quadratic KdV-type equation}

In this section, we consider the Cauchy problem for the semi-linear KdV-type equation with quadratic nonlinearity.
Let $u$ be a solution to \eqref{KdV2}.
Then, $\dx u$ and $\dx^2u$ satisfy the following equations:
\begin{align}
\label{KdV'}
\Lc \dx u &= (c_1+2c_2) \dx u \dx^2 u + \left( c_1 u + c_3 \dx u \right) \dx^3 u + c_3 (\dx^2 u)^2 + 2 c_4 \dx^2 u \dx^3 u, \\
\label{KdV''}
\Lc \dx^2 u &= (c_1+2c_2) (\dx^2 u)^2 + \left( 2(c_1+c_2) \dx u + 3 c_3 \dx^2 u \right) \dx^3u \\
&\quad + \left( c_1 u + c_3 \dx u + 2c_4 \dx^2 u \right) \dx^4u + 2c_4 (\dx^3 u)^2. \notag
\end{align}

Set $\J := 2 c_4 \dx u$ and $w := e^{-\J} \dx^2u$.
Then, \eqref{gauge}, \eqref{KdV'}, and \eqref{KdV''} yield that
\begin{equation} \label{KdV'''}
\Lc w
= (c_1u +c_3 \dx u) \dx^2 w + \mathcal{N}_1,
\end{equation}
where $\mathcal{N}_1$ is a linear combination of forms
\[
f_1 \dx w, \quad
f_1 f_2 \dx w, \quad
e^{-\J} f_1f_2, \quad
e^{-\J} f_1f_2f_3, \quad
e^{-\J} f_1f_2f_3f_4
\]
for $f_j \in \{ u, \dx u, e^{\J} w \}$.

Let $\K := c_1 \int_{-\infty}^x u dy + c_3 u$ and $\wf := e^{-\K} \dx w$.
Because
\[
\Lc \int_{-\infty}^x u(t,y) dy
= c_1 u \dx u - (c_1-c_2) \int_{-\infty}^x (\dy u)^2 dy + c_4 \int_{-\infty}^x (\dy^2 u)^2 dy,
\]
\eqref{gauge} and \eqref{KdV'''} imply that $\Lc \wf$ is equal to a linear combination of forms
\begin{align*}
&f_1\dx \wf, \ f_1 f_2 \dx \wf,  \quad
\left(\int_{-\infty}^x (\dy u)^2 dy\right) \wf, \quad
\left(\int_{-\infty}^x e^{2\J} w^2 dy\right) \wf, \\
&e^{-\J-\K} g_1 g_2, \quad
e^{-\J-\K} g_1 g_2 g_3, \quad
e^{-\J-\K} g_1 g_2 g_3 g_4, \quad
e^{-\J-\K} g_1 g_2 g_3 g_4 g_5.
\end{align*}
for $f_j \in \{ u, \dx u, e^{\J} w \}$, $g_k \in \{ u, \dx u, e^{\J} w, e^{\J+\K} \wf \}$.
Moreover, \eqref{KdV2}, \eqref{KdV'}, and \eqref{KdV'''} are written as follows:
\begin{align*}
\Lc u &= c_1 e^{\J} u w + c_2 (\dx u)^2 + c_3 e^{\J} \dx u w + c_4 e^{2\J} w^2, \\
\Lc \dx u &= (c_1+2c_2) e^{\J} \dx u w + (c_1u+c_3 \dx u) e^{\J} ( e^{\K} \wf + 2c_4 w^2) + c_3 e^{2\J} w^2 \\
&\quad + 2c_4 e^{2\J} w ( e^{\K} \wf + 2c_4 w^2), \\
\Lc w &= (c_1u+c_3 \dx u) e^{\K} \dx \wf + \wt{\mathcal{N}}_1,
\end{align*}
where $\wt{\mathcal{N}}_1$ is a linear combination of forms
\[
e^{\K} f_1 \wf, \quad
e^{\K} f_1 f_2 \wf, \quad
e^{-\J} f_1f_2, \quad
e^{-\J} f_1f_2f_3, \quad
e^{-\J} f_1f_2f_3f_4
\]
for $f_j \in \{ u, \dx u, e^{\J} w \}$.
Hence, we can apply the contraction mapping theorem as in \S \ref{S:WPX0} to obtain well-posedness in $\mathcal{X}^4$ of \eqref{KdV2}.

We define the norm as follows:
\begin{align*}
\| u \|_{\mathcal{Z}_T} :=& \| u \|_{X_T} + \| \dx u \|_{X_T} + \left\| e^{-c_4 \dx u} \dx^2 u \right\|_{X_T} \\
&\quad+ \left\| e^{-c_1\int_{-\infty}^x u(t,y) dy - c_3 u} \dx \left( e^{-c_4 \dx u} \dx^2 u \right) \right\|_{X_T} + \left\| c_1 \int_{-\infty}^x u(t,y) dy \right\|_{L_{T,x}^{\infty}}.
\end{align*}
Because
\begin{align*}
\| \J \|_{L_{T,x}^{\infty}}
&\le 2|c_4| \| \dx u \|_{L_{T,x}^{\infty}}
\lesssim \| u \|_{L_T^{\infty} H^2}, \\
\| \K \|_{L_{T,x}^{\infty}}
&\le |c_1| \left\| \int_{-\infty}^x u(t,y) dy \right\|_{L_{T,x}^{\infty}} + |c_3| \| u \|_{L_{T,x}^{\infty}}
\lesssim \| u \|_{L_T^{\infty} \mathcal{X}^1}, 
\end{align*}
\eqref{est:lin} and a similar calculation as in \eqref{estun1} and \eqref{estun2} yield that
\[
\| u \|_{\mathcal{Z}_{T}} \le C_1 \| u_{0} \|_{\mathcal{X}^1} + C_2 T^{\frac{1}{2}} e^{C_3 \| u \|_{\mathcal{Z}_{T}}} \| u \|_{\mathcal{Z}_{T}} \left( 1 + \| u \|_{\mathcal{Z}_{T}}^4 \right)
\]

When $c_4=0$, we set
\begin{align*}
\| u \|_{\mathcal{Z}'_T} & := \| u \|_{X_T} + \left\| e^{-c_3 u} \dx u \right\|_{X_T} + \left\| e^{-c_1\int_{-\infty}^x u(t,y) dy} \dx \left( e^{-c_3 u} \dx u \right) \right\|_{X_T} \\
&\quad + \left\| c_1 \int_{-\infty}^x u(t,y) dy \right\|_{L_{T,x}^{\infty}}.
\end{align*}
Then, the same argument as above shows that
\[
\| u \|_{\mathcal{Z}'_{T}} \le C_1 \| u_{0} \|_{\mathcal{X}^1} + C_2 T^{\frac{1}{2}} e^{C_3 \| u \|_{\mathcal{Z}'_{T}}} \| u \|_{\mathcal{Z}'_{T}} \left( 1 + \| u \|_{\mathcal{Z}'_{T}}^4 \right).
\]

\begin{rmk}
When $c_1=0$, the boundedness of primitives is not necessary, because $\int_{-\infty}^x u(t,y) dy$ disappears in $\| u \|_{\mathcal{Z}_T}$ and $\| u \|_{\mathcal{Z}'_T}$.
\end{rmk}

\section{Irregular flow maps}

\subsection{On the condition for initial data}

For $c_1 \neq 0$, Pilod \cite{Pil08} proved that the flow map fails to be twice differentiable in $H^s(\R)$ for any $s \in \R$.
Here, we briefly observe that our result does not contradict to Pilod's result.

For simplicity, we consider \eqref{KdV} with $c_1 \neq 0$ and $c_2=0$.
Pilod put the following sequence of the initial data:
\[
u_{0,N} := \mathcal{F}^{-1} \left[ N \bm{1}_{[-N^{-2},N^{-2}]} + N^{-s+1} \bm{1}_{[-N-N^{-2},-N+N^{-2}] \cup [N-N^{-2},N+N^{-2}]} \right]
\]
for any $N \ge 1$.
Then, $\| u_{0,N} \|_{H^s} \lesssim 1$.

If $\xi_1 \in [N-N^{-2},N+N^{-2}]$ and $\xi-\xi_1 \in [-N^{-2},N^{-2}]$, then $\xi \in [N-2N^{-2}, N+2N^{-2}]$ and
\[
|\xi^3-(\xi-\xi_1)^3-\xi_1^3| = 3 |\xi \xi_1 (\xi-\xi_1)| \lesssim 1.
\]
Accordingly, for $0<T \ll 1$, we have
\begin{align*}
&\left\| \int_0^t \lp (t-t') \left( \lp (t') u_{0,N}(x) \lp (t') \dx^2 u_{0,N} (x) \right) dt' \right\|_{L_T^{\infty} H^s} \\
&\gtrsim T \left\| N^{-s+2} \mathcal{F}^{-1} \left[ \bm{1}_{[N-N^{-2},N+N^{-2}]} \right] \right\|_{L_T^{\infty} H^s}
\gtrsim T N,
\end{align*}
which shows the flow map fails to be twice differentiable in $H^s(\R)$.

By a simple calculation, the initial datum is written as follows:
\[
u_{0,N}(x) = \sqrt{\frac{2}{\pi}} \left( 1 + 2N^{-s} \cos Nx \right) N \frac{\sin N^{-2}x}{x}.
\]
Since
\begin{align*}
\int_{-\infty}^{\infty} u_{0,N} (y) dy
&= \sqrt{\frac{2}{\pi}} N \left( \int_{-\infty}^{\infty} \frac{\sin y}{y} dy + 2 N^{-s} \int_{-\infty}^{\infty} \frac{\sin y \cos N^3 y}{y} dy \right) \\
&= \sqrt{2\pi} N,
\end{align*}
this sequence is not bounded in $\mathcal{X}^s$.
In other words, we can avoid the worst interaction because of $\sup_{x \in \R} \left| \int_{-\infty}^x u_0 (y) dy \right| <\infty$.

\subsection{Not $C^2$ in $\mathcal{X}^s$}

For simplicity, we assume that $c_1=1$ and $c_2=0$.
We set
\[
u_{0,N} := \mathcal{F}^{-1} \left[ N^{-s+\frac{a}{2}} \bm{1}_{[-N-N^{-a},-N+N^{-a}] \cup [N-N^{-a},N+N^{-a}]} \right]
\]
for any $N \gg 1$ and $a>0$.
Then, $\| u_{0,N} \|_{H^s} \lesssim 1$.
Since
\[
u_{0,N} (x) = 2 \sqrt{\frac{2}{\pi}} N^{-s+\frac{a}{2}} \frac{\sin N^{-a} x}{x} \cos Nx,
\]
a direct calculation shows
\begin{align*}
\int_{-\infty}^x u_{0,N} (y) dy
&=\sqrt{\frac{2}{\pi}} N^{-s+\frac{a}{2}} \int_{-\infty}^x \left\{ \frac{\sin (N+N^{-a})y}{y} - \frac{\sin (N-N^{-a})y}{y} \right\} dy \\
&=\sqrt{\frac{2}{\pi}} N^{-s+\frac{a}{2}} \int_{(N-N^{-a})x}^{(N+N^{-a})x} \frac{\sin y}{y} dy.
\end{align*}
The mean value theorem for integrals yields
\[
\sup_{x \in \R} \left| \int_{-\infty}^x u_{0,N}(y) dy \right|
\lesssim N^{-s-\frac{a}{2}-1}.
\]
Therefore, $\{ u_{0,N} \}$ is a bounded sequence in $\mathcal{X}^s$ provided that $s>-\frac{a}{2}-1$.

On the other hand, for $0<T \ll 1$, we have
\begin{align*}
&\sup_{t \in [-T,T], x \in \R} \left| \int_{-\infty}^x \int_0^t \lp (t-t') \left( \lp (t') u_{0,N}(y) \lp (t') \dx^2 u_{0,N} (y) \right) dt' dy \right| \\
&\gtrsim \sup_{t \in [-T,T]} \left| \mathcal{F} \left[ \int_0^t \lp (t-t') \left( \lp (t') u_{0,N}(x) \lp (t') \dx^2 u_{0,N} (x) \right) dt' \right] (0) \right| \\
&\gtrsim T \left| \mathcal{F} \left[ u_{0,N} \dx^2 u_{0,N} \right] (0) \right|
\gtrsim T N^{-2s+2},
\end{align*}
which shows the flow map fails to be twice differentiable in $\mathcal{X}^s$ for $s<1$.

\section*{Acknowledgment}
This work was supported by JSPS KAKENHI Grant Numbers JP16K17624 and JP17K14220, 
Program to Disseminate Tenure Tracking System from the Ministry of Education, Culture, Sports, Science and Technology, 
and the DFG through the CRC 1283 ``Taming uncertainty and
profiting from randomness and low regularity in analysis, stochastics
and their
applications.''

\end{document}